\documentclass[a4paper,10pt,twoside,draft]{article}
\pdfoutput=1

\makeatletter


\usepackage[T1]{fontenc}
\usepackage[utf8]{inputenc}
\usepackage{textcomp}

\usepackage[english]{babel}

\usepackage{enumitem}

\usepackage{color}

\usepackage{amsmath}
\usepackage{amsthm}
\usepackage{mathtools}

\usepackage{url}
\usepackage[unicode,colorlinks,pdfusetitle,final]{hyperref}

\usepackage{tikz}
\usetikzlibrary[arrows,decorations.pathreplacing,positioning,shapes,fadings]

\usepackage[numbers,sort]{natbib}


\usepackage{lmodern}
\usepackage{upgreek}
\usepackage{amssymb}
\usepackage{amsfonts}

\usepackage[scaled]{helvet}

\usepackage[bf,small]{titlesec}
\usepackage[labelfont={bf},labelsep=period]{caption}

\usepackage[final]{microtype}


\linespread{1.02}

\setlength{\textwidth}{380pt}
\setlength{\evensidemargin}{(\paperwidth-\textwidth)/2 - 1in}
\setlength{\oddsidemargin}{\evensidemargin}
\setlength{\marginparwidth}{(\paperwidth-\textwidth)/2 - 20pt}

\setlength{\textheight}{660pt}
\setlength{\topmargin}{
  (\paperheight-\textheight-\headheight-\headsep-\footskip)/2 - 1in
}

\setenumerate{label=\alph*)}

\titlelabel{\thetitle.\space}

\numberwithin{equation}{section}

\newcounter{and}
\newdimen\instindent
\newcommand{\institute}[1]{\newcommand{\@institute}{#1}}
\newcommand{\inst}[1]{\unskip\smash{$^#1$}\setcounter{and}{1}\ignorespaces}
\newcommand{\email}[1]{\href{mailto:#1}{\texttt{#1}}}

\renewcommand{\maketitle}{
  { 
    \raggedright
    \Large
    \noindent
    \bfseries
    \@title
    \par
  }

  \vspace{1.5\baselineskip}

  { 
    \raggedright
    \renewcommand{\and}{\unskip, \ignorespaces}
    \noindent\ignorespaces\@author\par
  }

  \vspace{0.5\baselineskip}

  { 
    \small
    \parindent=0pt
    \parskip=0pt
    \setcounter{and}{1}%
    \renewcommand{\and}{%
      \par\stepcounter{and}%
      \hangindent\instindent
      \noindent
      \hbox to \instindent{\hss\smash{$^{\theand}$\enspace}}\ignorespaces
    }
    \setbox0=\vbox{\@institute}
    \ifnum\value{and}>9\relax\setbox0=\hbox{$^{88}$\enspace}%
    \else\setbox0=\hbox{$^{8}$\enspace}\fi
    \instindent=\wd0\relax
    \ifnum\value{and}=1\relax
    \else
      \setcounter{and}{1}%
      \hangindent\instindent
      \noindent
      \hbox to \instindent{\hss\smash{$^{\theand}$}\enspace}\ignorespaces
    \fi
    \ignorespaces
    \@institute\par
  }
}

\renewenvironment{abstract}{
  \addvspace{1.5\baselineskip}
  \topsep=0pt\partopsep=0pt
  \trivlist\item[\hskip\labelsep\bfseries Abstract.]
}{}

\newenvironment{acknowledgments}{
  \addvspace{1.5\baselineskip}
  \topsep=0pt\partopsep=0pt
  \trivlist\item[\hskip\labelsep\bfseries Acknowledgments.]
}{}


\newcommand{\latinabbr}{\textit}
\newcommand{\cf}{\latinabbr{cf.}\ }
\newcommand{\eg}{\latinabbr{e.g.}}

\newcommand{\ie}{\latinabbr{i.e.}}

\newcommand{\defn}{\doteq} 

\newcommand{\e}{\mathrm{e}}

\newcommand{\field}[1][K]{\mathbb{#1}}
\newcommand{\NN}{\field[N]}
\newcommand{\ZZ}{\field[Z]}
\newcommand{\RR}{\field[R]}
\newcommand{\CC}{\field[C]}



\DeclarePairedDelimiter{\abs}{\lvert}{\rvert}

\newcommand{\dif}{\mathrm{d}}
\newcommand{\od}[3][]{\frac{\dif^{#1}#2}{\dif#3^{#1}}}
\newcommand{\pd}[3][]{\frac{\partial^{#1}#2}{\partial#3^{#1}}}


\theoremstyle{plain}
\newtheorem{theorem}{Theorem}

\newtheorem{proposition}[theorem]{Proposition}

\theoremstyle{definition}
\newtheorem{definition}[theorem]{Definition}

\theoremstyle{remark}


\definecolor{hypercolor}{rgb}{0,0.2,0.7}
\hypersetup{
  linkcolor=hypercolor,
  urlcolor=hypercolor,
  citecolor=hypercolor,
  pdfauthor=Daniel Siemssen,
  pdfcreator=LaTeX,
  pdfproducer=pdfTeX
}


\newcommand{\lddots}{\,.\,.\,}

\usepackage{xstring}

\newcommand{\perm}[1]{\textls*[100]{#1}}
\newcommand{\cperm}[1]{%
  \StrLeft{#1}{1}[\cperm@first]%
  \StrRight{#1}{1}[\cperm@last]%
  \StrGobbleLeft{#1}{1}[\cperm@tmp]%
  \StrGobbleRight{\cperm@tmp}{1}[\cperm@middle]%
  $\dot\cperm@first$\textls[100]{\cperm@middle}$\dot\cperm@last$%
}

\newcommand{\perms}{\mathfrak{S}}
\newcommand{\cperms}{\mathfrak{C}}
\newcommand{\aperms}{\mathfrak{A}}

\newcommand{\oeis}[1]{\href{http://oeis.org/#1}{\texttt{#1}}}

\DeclareMathOperator{\ord}{ord}


\makeatother

\begin{document}

\title{Enumerating Permutations by their Run Structure}

\author{Christopher J. Fewster\inst{1} \and Daniel Siemssen\inst{2}}

\institute{
  Department of Mathematics, University of York, Heslington, York YO10 5DD, UK.\\
  E-Mail: \email{chris.fewster@york.ac.uk}
  \and
  Dipartimento di Matematica, Università di Genova, Via Dodecaneso 35, 16146 Genova, Italy.\\
  E-Mail: \email{siemssen@dima.unige.it}
}

\maketitle


\begin{abstract}
  Motivated by a problem in quantum field theory, we study the up and down structure of circular and linear permutations.
  In particular, we count the length of the (alternating) runs of permutations by representing them as monomials and find that they can always be decomposed into so-called `atomic' permutations introduced in this work.
  This decomposition allows us to enumerate the (circular) permutations of a subset of $\mathbb{N}$ by the length of their runs.
  Furthermore, we rederive, in an elementary way and using the methods developed here, a result due to Kitaev on the enumeration of valleys.
\end{abstract}


\section{Introduction}
\label{sec:introduction}

Let us adopt the following notation for integer intervals: $[a \lddots b] \defn [a, b] \cap \NN = \{a, a+1, \dotsc, b\}$ with the special case $[n] \defn [1 \lddots n]$.

Let $S \subset \NN$ contain $n$ elements. A \emph{permutation} of $S$ is a linear ordering $\sigma_1, \sigma_2, \dotsc, \sigma_n$ of the elements of $S$.
We denote the set of all $n!$ permutations of~$S$ by~$\perms_S$ and by~$\perms_n$ if $S = [n]$.
Almost all statements and proofs below are given for the special case $S = [n]$ and the obvious generalization to generic finite subsets of $\NN$ is implicit.
It is customary to write the permutation as the word $\sigma = \sigma_1 \sigma_2 \,\cdots\, \sigma_n$ and we can identify the permutation~$\sigma$ with the bijection $\sigma(i) = \sigma_i$.

Replacing the linear ordering by a circular one, we arrive at the notion of \emph{circular permutations} of~$S$, \ie, the arrangements of the elements $\sigma_1, \sigma_2, \dotsc, \sigma_n$ of $S$ around an oriented circle (turning the circle over generally produces a different permutation).
In this case one can always choose $\sigma_1$ to be the smallest element of~$S$ so that $\sigma_1 = 1$ if $S = [n]$.
It is clear that the set $\cperms_S$ of all circular permutations of $S$ contains $(n-1)!$ elements; if $S = [n]$, we denote it by $\cperms_n$.
Circular permutations may be written as words of the form $\sigma = \dot \sigma_1 \sigma_2 \,\cdots\, \dot \sigma_n$, where we emphasize the circular symmetry by a dot above outermost characters of the word.\footnote{This is in analogy with the notation for repeating decimals when representing rational numbers.}
Moreover, due to the circular symmetry we can define $\sigma(0) \defn \sigma(n)$ and $\sigma(n+1) \defn \sigma(1)$.

Let us introduce yet another class of permutations.
We denote by $\aperms^\pm_S \subset \perms_S$ the rising and falling \emph{`atomic'} permutations\footnote{The rationale for this naming should become clear later when we see that arbitrary permutations can be decomposed into atomic permutations, but no further.} of an $n$-element subset $S \subset \NN$.
We define $\aperms^+_S$ as those permutations that, when written as a word, begin with the smallest and end with the largest element of $S$.
The falling atomic permutations $\aperms^-_S$ are reversed rising atomic permutations, \ie, they begin with the largest element of $S$ and end with the smallest.
Naturally, the cardinality of $\aperms^\pm_S$ is $(n-2)!$.
If $S = [n]$, we write $\aperms^\pm_n$ and see that it is the set of permutations of the form $1\,\dotsm\,n$ (\ie, $\sigma_1 = 1$ and $\sigma_n = n$) or $n\,\dotsm\,1$ (\ie, $\sigma_1 = n$ and $\sigma_n = 1$) respectively.

We say that $i$ is a \emph{descent} of $\sigma$ if $\sigma(i) > \sigma(i+1)$ and, conversely, that it is an \emph{ascent} of a (circular) permutation $\sigma$ if $\sigma(i) < \sigma(i+1)$.
For example, the permutation \perm{52364178} has the descents $1, 4, 5$ and the ascents $2, 3, 6, 7$ whereas the circular permutation \cperm{14536782} has the descents $3, 7, 8$ and the ascents $1, 2, 4, 5, 6$.
We can collect all the descents of a (circular) permutation $\sigma$ in the \emph{descent set}
\begin{equation*}
  D(\sigma) \defn \{ i \mid \sigma(i) > \sigma(i+1) \}.
\end{equation*}
It is an elementary exercise in enumerative combinatorics to count the number of permutations of~$[n]$ whose descent set is given by a fixed $S \subseteq [n-1]$.
Let $S = \{s_1, s_2, \dotsc, s_k \}$ be an ordered subset of~$[n-1]$, then
\begin{equation}\label{eq:num_permations_descent_set}
  \beta(S)
  \defn \big|\{ \sigma \in \perms_n \mid D(\sigma) = S \}\big|
  = \sum_{T \subseteq S} (-1)^{|S-T|} \binom{n}{s_1, s_2 - s_1, s_3 - s_2, \dotsc, n - s_k},
\end{equation}
see, for example, \cite[Theorem 1.4]{bona:2012}.
This result is easily adapted to circular permutations.

Related to the notions of \emph{ascents} and \emph{descents} are the concepts of \emph{peaks} and \emph{valleys}:
A peak occurs at position $i$ if $\sigma(i-1) < \sigma(i) > \sigma(i+1)$, whereas a valley occurs in the opposite situation $\sigma(i-1) > \sigma(i) < \sigma(i+1)$.
The peaks and valleys of a (circular) permutation $\sigma$ split it into \emph{runs}, also called \emph{sequences} or \emph{alternating runs}.

\begin{definition}
  A \emph{run}~$r$ of a (linear or circular) permutation $\sigma$ is an interval $[i \lddots j]$ such that $\sigma(i) \gtrless \sigma(i+1) \gtrless \dotsb \gtrless \sigma(j)$ is a monotone sequence, either increasing or decreasing, and so that it cannot be extended in either direction; its length is defined to be $j-i$.
  If $\sigma$ is a permutation of an $n$-element set, the collection of the lengths of all runs gives a partition~$p$ of $n-1$ (linear permutations) or $n$ (circular permutations).
  The partition $p$ is called the \emph{run structure} of~$\sigma$.
\end{definition}

It follows that a run starts and ends at peaks, valleys or at the outermost elements of a permutation.
For example, the permutation \perm{52364178} has runs $[1 \lddots 2]$, $[2 \lddots 4]$, $[4 \lddots 6]$, $[6 \lddots 8]$ with lengths $1, 2, 2, 2$, whereas the circular permutation \cperm{14536782} has runs $[1 \lddots 3]$, $[3 \lddots 4]$, $[4 \lddots 7]$, $[7 \lddots 9]$, of lengths $2, 1, 3, 2$.
Representing these runs by their image under the permutation, they are more transparently written as $\textls[100]{52, 236, 641, 178}$ and $\textls[100]{145, 53, 3678, 821}$ respectively.
The runs of (circular) permutations can also be neatly represented as directed graphs as shown in Figure~\ref{fig:run_graphs}.
In these graphs the peaks and valleys correspond to double sinks and double sources.
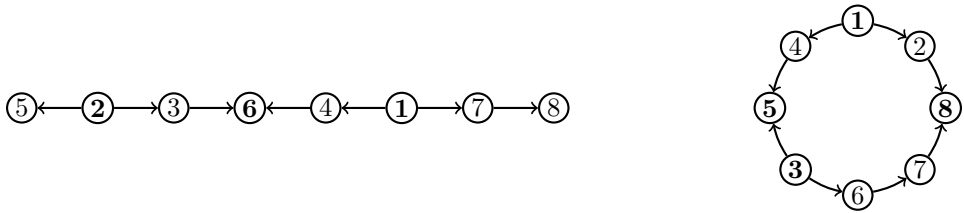
\begin{figure}
  \centering
  \begin{tikzpicture}
    [ scale=1
    , every node/.style={circle, draw, inner sep=1pt, minimum size=11pt}
    , every path/.style={thick}
    ]


    \node (5) at (0,0) {$5$};
    \node (2) at (1,0) {$\mathbf{2}$} edge [->] (5);
    \node (3) at (2,0) {$3$}          edge [<-] (2);
    \node (6) at (3,0) {$\mathbf{6}$} edge [<-] (3);
    \node (4) at (4,0) {$4$}          edge [->] (6);
    \node (1) at (5,0) {$\mathbf{1}$} edge [->] (4);
    \node (7) at (6,0) {$7$}          edge [<-] (1);
    \node (8) at (7,0) {$8$}          edge [<-] (7);


    \node[name=c, circle, minimum size=65pt, draw=none] at (11,0) {};

    \node (1) at (c.north)      {$\mathbf{1}$};
    \node (4) at (c.north west) {$4$}          edge [<-, bend left=10]  (1);
    \node (5) at (c.west)       {$\mathbf{5}$} edge [<-, bend left=10]  (4);
    \node (3) at (c.south west) {$\mathbf{3}$} edge [->, bend left=10]  (5);
    \node (6) at (c.south)      {$6$}          edge [<-, bend left=10]  (3);
    \node (7) at (c.south east) {$7$}          edge [<-, bend left=10]  (6);
    \node (8) at (c.east)       {$\mathbf{8}$} edge [<-, bend left=10]  (7);
    \node (2) at (c.north east) {$2$}          edge [->, bend left=10]  (8)
                                               edge [<-, bend right=10] (1);
  \end{tikzpicture}

  \caption{The two directed graphs representing the runs of the permutation \protect\perm{52364178} (left) and the circular permutation \protect\cperm{14536782} (right). Peaks and valleys are indicated by boldface numbers.}
  \label{fig:run_graphs}
\end{figure}

Motivated by a problem in mathematical physics \cite{fewster:2012b} (see also section~\ref{sec:app}), we are interested in the following issue, which we have not found discussed in the literature.
By definition, the run structure associates each permutation $\sigma \in \cperms_n$ with a partition~$p$ of~$n$.
For example, \cperm{14536782} and \cperm{13452786} both correspond to the same partition $1 + 2 + 2 + 3$ of $8$.
Our interest is in the inverse problem: given a partition $p$ of $n$, we ask for the number~$Z_{\cperms}(p)$ of circular permutations whose run structure is given by~$p$.
One may consider similar questions for other classes of permutations, with slight changes; for example, note that the run structure of a permutation $\sigma \in \perms_n$ is a partition of $n-1$.

The literature we have found focuses on issues such as the number of permutations in which all runs have length one, \eg, André's original study~\cite{andre:1895}, or on the enumeration question where the order of run lengths is preserved, so obtaining a map to compositions, rather than partitions, of $n$.
See, for example, \cite{brown:2007}, which studies this for the ordinary (noncircular) permutations.
In principle our question could be obtained by specialising this more difficult problem, but here we take a direct approach.
Another alternative approach not followed here, would be to take \eqref{eq:num_permations_descent_set} as a starting point and to sum over all descent sets corresponding to a particular run structure.
However, this would neither be straightforward from the theoretical side, nor would it make for efficient computation.
By contrast, our direct method was designed to facilitate computation; for the application in \cite{fewster:2012b} calculations were taken up to $n=65$ using exact integer arithmetic in Maple\texttrademark~\cite{maple:16}.

Sections~\ref{sec:atomic}, \ref{sec:circular} and \ref{sec:linear} deal, respectively, with the enumeration of the run structure of atomic, circular and linear permutations.
Using a suitable decomposition, this is accomplished in each case by reducing the enumeration problem to that for atomic permutations.
In section~\ref{sec:counting_valleys} we apply and extend the methods developed in the preceding sections to enumerate the valleys of permutations, thereby reproducing a result of Kitaev~\cite{kitaev:2007}.
Finally, in section~\ref{sec:app}, we discuss the original motivation for our work and other applications.


\section{Atomic permutations}
\label{sec:atomic}

Let us first discuss the significance of the atomic permutations.
We say that a permutation $\sigma \in \perms_S$ of~$S$ \emph{contains an atomic permutation} $\pi \in \aperms_T$, $T \subset S$, if $\pi$ can be considered a subword of~$\sigma$.
The atomic permutation~$\pi$ in~$\sigma$ is called \emph{inextendible} if $\sigma$ contains no other atomic permutation $\pi' \in \aperms_{T'}$, $T' \subset S$, such that $T \subsetneq T'$.

In particular, any permutation $\sigma \in \perms_S$ of~$S$ with $\abs{S} \geq 2$ contains an inextendible atomic permutation $\pi \in \perms_T$ of a subset $T \subset S$ that contains both the smallest and the largest element of~$S$.
That is, if $S = [n]$ and we consider $\sigma$ as a word, it contains a subword~$\pi$ of the form $1 \,\dotsm\, n$ or $n \,\dotsm\, 1$.
The permutation $\pi$ will be called the \emph{principal atom} of~$\sigma$.

\begin{proposition}\label{prop:decomposition}
  Any permutation $\sigma \in \perms_S$ of a finite set $S \subset \NN$ can be uniquely decomposed into a tuple $(\pi^1, \dotsc, \pi^k)$ of inextendible atomic permutations $\pi^i \in \aperms_{T_i}$, $T_i \subset S$ (non-empty) such that $\pi^i_{\abs{T_i}} = \pi^{i+1}_1$ for all $i < k$ and $\cup_i T_i = S$.
  We call $\pi^i$ the \emph{atoms} of $\sigma$.
\end{proposition}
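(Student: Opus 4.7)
The plan is to argue by strong induction on $n \defn \abs{S}$. The base cases $n \leq 2$ are immediate: $\sigma$ is itself atomic and $(\sigma)$ is the only tuple satisfying the stated conditions.

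For the inductive step, let $m$ and $M$ be the smallest and largest elements of $S$, occurring at positions $p$ and $q$ of $\sigma$. Without loss of generality assume $p < q$ (the case $p > q$ is symmetric, yielding a falling principal atom). The first key step is to identify the \emph{principal atom} $\pi^*$ as the contiguous factor of $\sigma$ spanning positions $p$ through $q$ and to verify that it is atomic on its letter set $T^*$, is inextendible in $\sigma$, and is the unique inextendible atom of $\sigma$ containing both $m$ and $M$. Atomicity is immediate since $\pi^*$ starts with $m = \min T^*$ and ends with $M = \max T^*$. For inextendibility, any atomic contiguous subword $\pi' \in \aperms_{T'}$ with $T' \supsetneq T^*$ has $m$ and $M$ in $T'$, so $m = \min T'$ and $M = \max T'$; atomicity forces $\pi'$ to start or end with $m$ (at position $p$) and correspondingly with $M$ (at position $q$), pinning the positions of $\pi'$ to $[p \lddots q]$ and giving $T' = T^*$, a contradiction. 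The same bookkeeping shows that any inextendible atom of $\sigma$ containing both $m$ and $M$ must coincide with $\pi^*$.

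Given $\pi^*$, split $\sigma$ at positions $p$ and $q$ into three pieces, namely $\sigma^L$ (positions $[1 \lddots p]$), $\pi^*$, and $\sigma^R$ (positions $[q \lddots n]$), which overlap only at the single letters $m$ and $M$. Both $\sigma^L$ and $\sigma^R$ are permutations of strictly smaller subsets of $\NN$, so the induction hypothesis supplies unique decompositions of each into inextendible atoms. Concatenating these with $\pi^*$ in the middle produces a tuple that, by construction, satisfies the shared\hyp endpoint and covering conditions. The one point requiring argument is that atoms inherited from $\sigma^L$ or $\sigma^R$ remain inextendible in the ambient $\sigma$. If some atom $\pi^j$ from $\sigma^L$ admitted a larger atomic extension $\pi'$ lying inside $\sigma^L$, the induction hypothesis would already exclude it; if instead $\pi'$ strays past position $p$, then it contains position $p$ in its interior and therefore the letter $m$, which by atomicity would be forced to sit at one of its endpoints, contradicting interiority.

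For uniqueness, consider any valid decomposition $(\pi^1, \dotsc, \pi^k)$ of $\sigma$. Whenever $m \in T_i$ we have $m = \min T_i$, and atomicity places $m$ at an endpoint of $\pi^i$; the analogous statement holds for $M$. Hence $p$ and $q$ are both shared\hyp endpoint positions, say $p = a_\alpha$ and $q = a_\beta$ with $\alpha < \beta$. The atom $\pi^\alpha$ is at positions $[a_\alpha \lddots a_{\alpha+1}]$, starts with $m$, and is therefore rising. The contiguous factor of $\sigma$ at positions $[p \lddots q]$ is atomic, and whenever $\beta > \alpha + 1$ its letter set strictly contains $T_\alpha$, contradicting the inextendibility of $\pi^\alpha$. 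Hence $\beta = \alpha + 1$, meaning $\pi^\alpha$ spans exactly $[p \lddots q]$ and equals $\pi^*$. The remaining atoms form decompositions of $\sigma^L$ and $\sigma^R$, which are unique by the induction hypothesis. The technical heart of the argument — and the step that needs the most care — is the single principle that any contiguous atomic factor of $\sigma$ whose letter set contains $m$ (or $M$) must carry $m$ (or $M$) at one of its endpoints; this observation simultaneously underwrites the uniqueness and inextendibility of $\pi^*$, blocks extensions across the recursion boundary, and locates $m$ and $M$ inside any competing decomposition.
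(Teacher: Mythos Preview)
Your argument differs from the paper's in structure: the paper performs a binary split at the position of the largest element, writing $\sigma = \alpha \cdot n \cdot \omega$ and recursing on $\alpha \cdot n$ and $n \cdot \omega$, with uniqueness dismissed in a single sentence. You instead pin down the principal atom $\pi^*$ directly as the factor spanning $[p \lddots q]$, split into three overlapping pieces, and give an explicit uniqueness argument showing that any valid decomposition is forced to contain $\pi^*$ as one of its atoms. Your route makes the role of the principal atom and the mechanism of uniqueness much clearer than the paper's terse treatment.

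There is, however, a boundary case your existence construction does not handle. When $p = 1$ (and symmetrically when $q = n$), the piece $\sigma^L$ is the single letter $m$; applying the base case yields the one-term decomposition $\big((m)\big)$, and your concatenation then begins with the singleton atom $(m)$. But $(m)$ is \emph{not} inextendible in $\sigma$: $\pi^*$ itself is an atomic subword whose letter set strictly contains $\{m\}$. Your check that inherited atoms remain inextendible does not catch this case, because an extension $\pi'$ of $(m)$ past position $p = 1$ carries $m$ at its \emph{left endpoint}, not in its interior, so your ``$m$ forced to an endpoint, contradicting interiority'' argument does not fire. The repair is easy --- simply omit $\sigma^L$ (resp.\ $\sigma^R$) from the concatenation whenever it is a singleton, or equivalently dispose of the cases $p = 1$ and $q = n$ separately at the outset --- but as written the construction is incomplete at this boundary.
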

\begin{proof}
  \emph{Existence:}
  It is clear that any permutation of a set of $1$~or $2$~elements is an atomic permutation.
  Suppose, for some $n\ge 3$, that all permutations of $n-1$ elements or less can be decomposed into inextendible atomic permutations.
  Without loss of generality, we show that any non-atomic permutation $\sigma \in \perms_n$ also has a decomposition into inextendible atomic permutations.
  Regarding $\sigma$ as a word, we can write $\sigma = \alpha \cdot n \cdot \omega$, where $\alpha$ and $\omega$ are non-empty subwords.
  Notice that the permutations $\alpha \cdot n$ and $n \cdot \omega$ have a unique decomposition by assumption.
  Since an atomic permutation begins or ends with the largest element, we find that a decomposition of~$\sigma$ into inextendible atomic permutations is given by the combination of the decompositions of $\alpha \cdot n$ and $n \cdot \omega$.

  \emph{Uniqueness:} This is clear from the definition of inextendibility.
\end{proof}

We now begin the enumeration of atomic permutations according to their run structure.
That is, for every partition~$p$ of ${n-1}$ we aim to find the number $Z_{\aperms}(p)$ of atomic permutations~$\aperms^\pm_n$ of length~$n$.

Observe that any $\sigma \in \aperms^+_{n}$ can be extended to a permutation in $\aperms^+_{n+1}$ by replacing $n$ by $n+1$ and reinserting $n$ in any position after the first and before the last.
Thus, \perm{13425} can be extended to \perm{153426}, \perm{135426}, \perm{134526} or \perm{134256}.
Every permutation in $\aperms^+_{n+1}$ arises in this way, as can be seen by reversing the procedure.
The effect on the run lengths can be described as follows.

\begin{description}[leftmargin=*]
  \item[Case 1:]\phantomsection\label{item:case_1}
    The length of one of the runs can be increased by one by inserting $n$ either at
    \begin{enumerate}[leftmargin=*]
      \item
        the end of an increasing run if it does not end in $n+1$, thereby increasing its length (\eg, \perm{13425} $\to$ \perm{134526})
        \\[.5em]
        \begin{tikzpicture}
          [ scale=.82
          , every node/.style={circle, draw=black!60, fill=white, inner sep=1pt, minimum size=11pt}
          , every path/.style={thick}
          ]
          \tikzstyle{dashsolid} = [dash pattern=on 4.5mm off .5mm on .5mm off .5mm on .5mm off .5mm on .5mm off .5mm on 4.5mm]

          \draw[dotted, path fading=west] (0,0) -- (1,0) ++(0,1pt);
          \draw[dotted, path fading=east] (5,0) -- (6,0) ++(0,1pt);
          \node (A) at (1,0) {};
          \node (B) at (2,0) {} edge [->] (A);
          \node (C) at (4,0) {} edge [<-, dashsolid] (B);
          \node (D) at (5,0) {} edge [->] (C);

          \draw[->] (6.5,0) -- (7,0);

          \draw[dotted, path fading=west] (7.5,0) -- (8.5,0) ++(0,1pt);
          \draw[dotted, path fading=east] (13.5,0) -- (14.5,0) ++(0,1pt);
          \node (A) at (8.5,0) {};
          \node (B) at (9.5,0) {} edge [->] (A);
          \node (C) at (11.5,0) {} edge [<-, dashsolid] (B);
          \node[draw=black] (N) at (12.5,0) {$n$} edge [<-] (C);
          \node (D) at (13.5,0) {} edge [->] (N);
        \end{tikzpicture}
      \item
        the penultimate position of an increasing run, thereby increasing its own length if it ends in $n+1$ (\eg, \perm{13425} $\to$ \perm{135426}) or increasing the length of the following decreasing run otherwise (\eg, \perm{13425} $\to$ \perm{134256})
        \\[.5em]
        \begin{tikzpicture}
          [ scale=.82
          , every node/.style={circle, draw=black!60, fill=white, inner sep=1pt, minimum size=11pt}
          , every path/.style={thick}
          ]
          \tikzstyle{dashsolid} = [dash pattern=on 4.5mm off .5mm on .5mm off .5mm on .5mm off .5mm on .5mm off .5mm on 4.5mm]

          \draw[dotted, path fading=west] (0,0) -- (1,0) ++(0,1pt);
          \draw[dotted, path fading=east] (5,0) -- (6,0) ++(0,1pt);
          \node (A) at (1,0) {};
          \node (B) at (2,0) {} edge [<-] (A);
          \node (C) at (4,0) {} edge [->, dashsolid] (B);
          \node (D) at (5,0) {} edge [<-] (C);

          \draw[->] (6.5,0) -- (7,0);

          \draw[dotted, path fading=west] (7.5,0) -- (8.5,0) ++(0,1pt);
          \draw[dotted, path fading=east] (13.5,0) -- (14.5,0) ++(0,1pt);
          \node (A) at (8.5,0) {};
          \node[draw=black] (N) at (9.5,0) {$n$} edge [<-] (A);
          \node (B) at (10.5,0) {} edge [->] (N);
          \node (C) at (12.5,0) {} edge [->, dashsolid] (B);
          \node (D) at (13.5,0) {} edge [<-] (C);

          \draw[dotted, path fading=west] (0,-0.75) -- (1,-0.75) ++(0,1pt);
          \node (A) at (1,-0.75) {};
          \node (B) at (2,-0.75) {} edge [->] (A);
          \node (C) at (4,-0.75) {} edge [<-, dashsolid] (B);
          \node[ellipse] (D) at (5.45,-0.75) {$n+1$} edge [<-] (C);

          \draw[->] (6.5,-0.75) -- (7,-0.75);

          \draw[dotted, path fading=west] (7.5,-0.75) -- (8.5,-0.75) ++(0,1pt);
          \node (A) at (8.5,-0.75) {};
          \node (B) at (9.5,-0.75) {} edge [->] (A);
          \node (C) at (11.5,-0.75) {} edge [<-, dashsolid] (B);
          \node[draw=black] (N) at (12.5,-0.75) {$n$} edge [<-] (C);
          \node[ellipse] (D) at (13.95,-0.75) {$n+1$} edge [<-] (N);
        \end{tikzpicture}
    \end{enumerate}
  \item[Case 2:]\phantomsection\label{item:case_2}
    Any run of length $i+j \geq 2$ becomes three run of lengths $1$, $i$ and $j$ if we insert $n$ either after
    \begin{enumerate}[leftmargin=*]
      \item
        $i$ elements of an increasing run (\eg, \perm{13425} $\to$ \perm{153426} exemplifies $i=1$, $j=1$)
        \\[.5em]\hspace*{-15pt}
        \begin{tikzpicture}
          [ scale=.82
          , every node/.style={circle, draw=black!60, fill=white, inner sep=1pt, minimum size=11pt}
          , every path/.style={thick}
          ]
          \tikzstyle{dashsolid} = [dash pattern=on 4.5mm off .5mm on .5mm off .5mm on .5mm off .5mm on .5mm off .5mm on 4.5mm]
          \tikzstyle{shortdashsolid} = [dash pattern=on 2.2mm off .5mm on .5mm off .5mm on .5mm off .5mm on .5mm off .5mm on 2.2mm]

          \draw[dotted, path fading=west] (0,0) -- (1,0) ++(0,1pt);
          \draw[dotted, path fading=east] (5,0) -- (6,0) ++(0,1pt);
          \node (A) at (1,0) {};
          \node (B) at (2,0) {} edge [->] (A);
          \node (C) at (4,0) {};
          \path[->, dashsolid] (B) edge node[above, rectangle, draw=none, fill=none, minimum size=0] {\footnotesize$i+j$} (C);
          \node (D) at (5,0) {} edge [->] (C);

          \draw[->] (6.5,0) -- (7,0);

          \draw[dotted, path fading=west] (7.5,0) -- (8.5,0) ++(0,1pt);
          \draw[dotted, path fading=east] (14.5,0) -- (15.5,0) ++(0,1pt);
          \node (A) at (8.5,0) {};
          \node (B) at (9.5,0) {} edge [->] (A);
          \node[draw=black] (N) at (11,0) {$n$};
          \path[->, shortdashsolid] (B) edge node[above, rectangle, draw=none, fill=none, minimum size=0] {\footnotesize$i\vphantom{j}$} (N);
          \node (M) at (12,0) {} edge [->] (N);
          \node (C) at (13.5,0) {};
          \path[->, shortdashsolid] (M) edge node[above, rectangle, draw=none, fill=none, minimum size=0] {\footnotesize$j$} (C);
          \node (D) at (14.5,0) {} edge [->] (C);
        \end{tikzpicture}
      \item
        $i+1$ elements of a decreasing run (\eg, \perm{14325} $\to$ \perm{143526} for $i=1$, $j=1$)
        \\[.5em]\hspace*{-15pt}
        \begin{tikzpicture}
          [ scale=.82
          , every node/.style={circle, draw=black!60, fill=white, inner sep=1pt, minimum size=11pt}
          , every path/.style={thick}
          ]
          \tikzstyle{dashsolid} = [dash pattern=on 4.5mm off .5mm on .5mm off .5mm on .5mm off .5mm on .5mm off .5mm on 4.5mm]
          \tikzstyle{shortdashsolid} = [dash pattern=on 2.2mm off .5mm on .5mm off .5mm on .5mm off .5mm on .5mm off .5mm on 2.2mm]

          \draw[dotted, path fading=west] (0,0) -- (1,0) ++(0,1pt);
          \draw[dotted, path fading=east] (5,0) -- (6,0) ++(0,1pt);
          \node (A) at (1,0) {};
          \node (B) at (2,0) {} edge [<-] (A);
          \node (C) at (4,0) {};
          \path[->, dashsolid] (C) edge node[above, rectangle, draw=none, fill=none, minimum size=0] {\footnotesize$i+j$} (B);
          \node (D) at (5,0) {} edge [<-] (C);

          \draw[->] (6.5,0) -- (7,0);

          \draw[dotted, path fading=west] (7.5,0) -- (8.5,0) ++(0,1pt);
          \draw[dotted, path fading=east] (14.5,0) -- (15.5,0) ++(0,1pt);
          \node (A) at (8.5,0) {};
          \node (B) at (9.5,0) {} edge [<-] (A);
          \node (M) at (11,0) {};
          \path[->, shortdashsolid] (M) edge node[above, rectangle, draw=none, fill=none, minimum size=0] {\footnotesize$i\vphantom{j}$} (B);
          \node[draw=black] (N) at (12,0) {$n$} edge [<-] (M);
          \node (C) at (13.5,0) {};
          \path[->, shortdashsolid] (C) edge node[above, rectangle, draw=none, fill=none, minimum size=0] {\footnotesize$j$} (N);
          \node (D) at (14.5,0) {} edge [<-] (C);
        \end{tikzpicture}
    \end{enumerate}
\end{description}
An analogous argument can be made for the falling atomic permutations $\aperms^-_n$.

Notice that partitions of positive integers can be represented by the monomials in the ring of polynomials\footnote{If one wants to encode also the order of the run (\eg, to obtain a map from permutations of length $n$ to the compositions of $n$), one can exchange the polynomial ring with a noncommutative ring. Alternatively, if one wants to encode the direction of a run, one could study instead the ring $\ZZ[x_1, y_1, x_2, y_2, \dotsc]$, where $x_i$ denotes an increasing run of length $i$ and $y_j$ encodes a decreasing run of length $j$.} $\ZZ[x_1, x_2, \dotsc]$ in infinitely many variables $x_1, x_2, \dotsc$ and with integer coefficients.
That is, we express a partition $p = p_1 + p_2 + \dotsb + p_m$ as $x_{p_1} x_{p_2} \,\cdots\, x_{p_m}$
 (\eg, the partition $1 + 2 + 2 + 3$ of $8$ is written as $x_1 x_2^2 x_3$).

Now, let $p$ be a partition of $n-1$ and $X$ the corresponding monomial.
To this permutation there correspond $Z_{\aperms}(p)$ permutations in $\aperms^\pm_n$ which can be extended to permutations in $\aperms^\pm_{n+1}$ in the manner described above.
Introducing the (formally defined) differential operator
\begin{equation}\label{eq:diff}
  \mathcal{D}   \defn \mathcal{D}_0 + \mathcal{D}_+
  \quad\text{with}\quad
  \mathcal{D}_0 \defn \sum_{i=1}^\infty x_{i+1} \pd{}{x_i},
  \;\;
  \mathcal{D}_+ \defn \sum_{i,j \,\geq\, 1} x_1 x_i x_j \pd{}{x_{i+j}},
\end{equation}
we can describe this extension in terms of the action of $\mathcal{D}$ on $X$.
We say that $\mathcal{D}_0$ is the \emph{degree-preserving} part of $\mathcal{D}$; it represents the \hyperref[item:case_1]{case 1} of increasing the length of a run: the differentiation $\partial/\partial x_i$ removes one of the runs of length $i$ and replaces it by a run of length $i+1$, keeping account of the number of ways in which this can be done. Similarly,
\hyperref[item:case_2]{case 2} of splitting a run into $3$ parts is represented by the \emph{degree-increasing} part $\mathcal{D_+}$.
For example, each of the $7$ atomic permutations corresponding to the partition $1 + 1 + 3$ can be extended as
\begin{equation*}
   \mathcal{D} x_1^2 x_3 = 2 x_1 x_2 x_3 + x_1^2 x_4 + x_1^4 x_2,
\end{equation*}
\ie, each can be extended to two atomic permutations corresponding to the partitions $1 + 2 + 3$, one corresponding to $1 + 1 + 4$ and one to $1 + 1 + 1 + 1 + 2$.

Therefore, starting from the trivial partition $1$ of $1$, represented as $x_1$, we can construct a recurrence relation for polynomials $A_n = A_n(x_1, x_2, \dotsc, x_n)$ which, at every step $n \geq 1$, encode the number of atomic permutations $Z_{\aperms}(p)$ of length $n+1$ with run structure given by a partition~$p$ of~$n$ as the coefficients of the corresponding monomial in $A_n$.
The polynomial~$A_n$, accordingly defined by
\begin{equation}\label{eq:A_rel_Z}
  A_n = \sum_{p \vdash n} Z_{\aperms}(p) \prod_{i=1}^n x_i^{p(i)},
\end{equation}
where the sum is over all partitions $p$ of $n$ and $p(i)$ denotes the multiplicity of $i$ in the partition $p$, can thus be computed from the recurrence relation
\begin{subequations}\label{eq:A_recurrence}
  \begin{align}
    A_1 & \defn x_1, \\
    A_n & \defn \mathcal{D} A_{n-1}, \qquad (n \geq 2).
  \end{align}
\end{subequations}
We say that the polynomials~$A_n$ enumerate the run structure of the atomic permutations.

We summarize these results in the following proposition:
\begin{proposition}\label{prop:Z_A}
  The number $Z_{\aperms}(p)$ of rising or falling atomic permutations of length $n-1$ corresponding to a given run structure (\ie, a partition $p$ of $n$), is determined by the polynomial $A_n$ via~\eqref{eq:A_rel_Z}.
  The polynomials $A_n$ satisfy the recurrence relation~\eqref{eq:A_recurrence}.
\end{proposition}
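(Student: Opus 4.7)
The plan is to prove both parts of the proposition together by induction on $n$, since the enumerative interpretation of $A_n$ is precisely what the recurrence is designed to encode. By the reversal bijection $\sigma_1\sigma_2\cdots\sigma_m \mapsto \sigma_m\cdots\sigma_2\sigma_1$, which maps $\aperms^+_{m}$ onto $\aperms^-_{m}$ while preserving the multiset of run lengths, it suffices to enumerate rising atomic permutations. The base case $n=1$ reduces to the single rising atom $12 \in \aperms^+_2$, whose unique run of length $1$ gives $Z_{\aperms}(1)=1$, matching $A_1 = x_1$.

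For the inductive step, assume that $A_{n-1}$ enumerates the run structures of $\aperms^+_n$. The first ingredient is a bijection
\[
  \Phi\colon \aperms^+_{n} \times \{2,3,\dotsc,n\} \longrightarrow \aperms^+_{n+1},
\]
where $\Phi(\sigma,k)$ is the word obtained from $\sigma$ by replacing its maximal element $n$ with $n+1$ and then inserting $n$ at position $k$. Both injectivity and surjectivity follow from the explicit inverse: any $\tau \in \aperms^+_{n+1}$ begins with $1$ and ends with $n+1$, so the symbol $n$ occupies a unique position $k$ with $2 \le k \le n$; deleting $n$ and relabeling $n+1 \mapsto n$ recovers the unique preimage. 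The cardinality check $(n-2)!\cdot(n-1) = (n-1)!$ is consistent.

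The second ingredient is to show that for each fixed $\sigma \in \aperms^+_n$ with run-structure monomial $X_\sigma$, one has
\[
  \sum_{k=2}^{n} X_{\Phi(\sigma,k)} = \mathcal{D} X_\sigma,
\]
where $X_\tau$ denotes the monomial encoding the run structure of $\tau$. This is exactly the content of the case analysis preceding~\eqref{eq:diff}: sub-cases 1a and 1b enumerate the positions at which inserting $n$ lengthens a single existing run of $\sigma$ by one, and their monomial contributions assemble into $\mathcal{D}_0 X_\sigma$; sub-cases 2a and 2b enumerate the positions at which a run of length $i+j \ge 2$ is fragmented into three runs of lengths $i,1,j$, and their contributions assemble into $\mathcal{D}_+ X_\sigma$. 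Summing the per-$\sigma$ identity over $\aperms^+_n$ and invoking the inductive hypothesis yields $A_n = \mathcal{D} A_{n-1}$ with the claimed coefficient interpretation, completing the induction.

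The main technical obstacle is the bookkeeping in the case analysis, namely confirming that each of the $n-1$ insertion positions falls under exactly one sub-case, with no double-counting near the terminal run (the one ending in $n+1$ after relabeling). This has already been set out in the preceding figures, so formalising it amounts to a careful case check rather than a genuinely difficult argument. A convenient consistency check is that $\mathcal{D}$ raises the weighted degree by one—matching the passage from partitions of $n-1$ to partitions of $n$—and that the coefficient sum of $\mathcal{D} X_\sigma$ equals $\sum_i i\cdot p(i) = n-1$, the number of valid insertion positions.
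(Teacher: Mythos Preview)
Your proposal is correct and follows essentially the same approach as the paper: the argument preceding the proposition (which the paper explicitly describes as being ``summarize[d]'' by Proposition~\ref{prop:Z_A}) is precisely the extension-by-insertion bijection you call $\Phi$, together with the Case~1/Case~2 analysis of its effect on run lengths, encoded by $\mathcal{D}_0$ and $\mathcal{D}_+$. You have made the argument somewhat more explicit---writing out $\Phi$ and its inverse, framing the whole thing as an induction, and adding the weighted-degree consistency check---but the underlying idea and structure are identical to the paper's.
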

\noindent
Note that atomic permutations always contain an odd number of runs and thus $Z_{\aperms}(p)$ is zero for even partitions $p$.

It will prove useful to combine all generating functions $A_n$ into the formal series
\begin{equation*}
  \mathcal{A}(\lambda)
  \defn \sum_{n=0}^\infty A_{n+1} \frac{\lambda^n}{n!}
  = \sum_{n=0}^\infty \mathcal{D}^n A_1 \frac{\lambda^n}{n!},
\end{equation*}
which can be expressed compactly as the exponential
\begin{equation*}
  \mathcal{A}(\lambda) = \exp(\lambda \mathcal{D}) A_1.
\end{equation*}

The first few $A_n$ are given by
\begin{align*}
  A_2 & = x_2 \\
  A_3 & = x_3 + x_1^3 \\
  A_4 & = x_4 + 5 x_2 x_1^2 \\
  A_5 & = x_5 + 7 x_3 x_1^2 + 11 x_2^2 x_1 + 5 x_1^5 \\
  A_6 & = x_6 + 9 x_4 x_1^2 + 11 x_2^3 + 38 x_3 x_2 x_1 + 61 x_2 x_1^4.
\end{align*}
For example, we can read off from $A_5$ that there is $1$ permutation in $\aperms^\pm_6$ corresponding to the trivial partition $5 = 5$, $7$ corresponding to the partition $5 = 1 + 1 + 3$, $11$ corresponding to $5 = 1 + 2 + 2$ and $5$ corresponding to $5 = 1 + 1 + 1 + 1 + 1$.
As a check, we note that $1 + 7 + 11 + 5 = 24$, which is the total number of elements of $\aperms^\pm_6$; similarly, the coefficients in the expression for $A_6$ sum to $120$, the cardinality of $\aperms^\pm_7$. A direct check that the coefficients in $A_n$ sum to $(n-1)!$ for all $n$ will be given in the last paragraph of section \ref{sec:counting_valleys}.

We now consider what can be said concerning the degree $k$ part, $A^{(k)}_n$, of the polynomials $A_n$. The first degree term $A^{(1)}_n$ of $A_n$ is $x_n$ as can be seen by a trivial induction using  $A^{(1)}_n = \mathcal{D}_0 A^{(1)}_{n-1}$, which follows from the recurrence relation~\eqref{eq:A_recurrence}.
Therefore $Z_{\aperms}(n) = 1$.

For $A_n^{(k)}$ with $k > 1$ the effect of $\mathcal{D}_+$ has to be taken into account as well, complicating matters considerably.
Nevertheless, the general procedure is clear: once $A_m^{(k-2)}$ is known for all $m < n$, $A_n^{(k)}$ can be obtained as
\begin{equation*}
  A_n^{(k)} = \mathcal{D}_0 A_{n-1}^{(k)} + \mathcal{D}_+ A_{n-1}^{(k-2)} = \sum_{m=k-1}^{n-1} \mathcal{D}_0^{n-m-1} \mathcal{D}_+ A_m^{(k-2)}.
\end{equation*}
Here one can make use of the following relation.
Applying $\mathcal{D}_0$ repeatedly to any monomial $x_{i_1} x_{i_2} \dotsm x_{i_k}$ of degree $k$ yields, as a consequence of the Leibniz rule,
\begin{equation}\label{eq:repeated_D}
  \mathcal{D}_0^n x_{i_1} x_{i_2} \dotsm x_{i_k} = \sum_{\substack{j_1, j_2, \dotsc, j_k \,\geq\, 0 \\ j_1 + j_2 + \dotsb + j_k = n}} \binom{n}{j_1, j_2, \dotsc, j_k}\, x_{i_1 + j_1} x_{i_2 + j_2} \dotsm x_{i_k + j_k}.
\end{equation}

This observation provides the means to determine the third degree term $A_n^{(3)}$.
Applying $\mathcal{D}_+$ to any $A_m^{(1)} = x_m$ with $m \geq 2$ produces $x_1 x_p x_q$ with $p+q = m$ and $p, q \geq 1$.
Moreover, the repeated action of $\mathcal{D}_0$ on $x_1 x_p x_q$ is described by~\eqref{eq:repeated_D} and thus
\begin{equation*}
  A_n^{(3)} = \sum_{\substack{p, q, r, s, t \,\geq\, 0 \\ 1 + p + q + r + s + t = n}} \binom{n - p - q - 1}{r, s, t}\, x_{1+r} x_{p+s} x_{q+t}.
\end{equation*}
After some algebra this yields
\begin{proposition}
  The third degree term $A_n^{(3)}$ of the polynomial $A_n, n \geq 3,$ is given by
  \begin{equation}\label{eq:third_degree}
    A_n^{(3)}
    = \sum_{\substack{i, j, k \,\geq\, 1 \\ i + j + k = n}} \sum_{q=1}^{k} \frac{n-q-1}{n-q-j}\, \binom{n - q - 2}{i - 1, j - 1, k - q}\, x_i x_j x_k.
  \end{equation}
\end{proposition}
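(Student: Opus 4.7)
The plan is to follow the computation sketched in the paragraph preceding the proposition, and then perform the inner sum using standard binomial identities.

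First, I would iterate the relation $A_n^{(k)} = \mathcal{D}_0 A_{n-1}^{(k)} + \mathcal{D}_+ A_{n-1}^{(k-2)}$ together with the base case $A_k^{(k)} = \mathcal{D}_+ A_{k-1}^{(k-2)}$ (since $A_m^{(k)} = 0$ for $m < k$) to obtain $A_n^{(3)} = \sum_{m=2}^{n-1} \mathcal{D}_0^{n-m-1}\mathcal{D}_+ A_m^{(1)}$. Substituting $A_m^{(1)} = x_m$ gives $\mathcal{D}_+ x_m = \sum_{p+q = m,\,p,q \geq 1} x_1 x_p x_q$, and applying \eqref{eq:repeated_D} to each $x_1 x_p x_q$ with $n-m-1 = n-p-q-1$ iterations of $\mathcal{D}_0$ yields precisely the expression
\begin{equation*}
  A_n^{(3)} = \sum_{\substack{p, q \,\geq\, 1,\, r, s, t \,\geq\, 0 \\ 1 + p + q + r + s + t = n}} \binom{n - p - q - 1}{r, s, t}\, x_{1+r} x_{p+s} x_{q+t}
\end{equation*}
as announced in the preceding paragraph.

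Next, I would perform the change of indices $i = 1+r$, $j = p+s$, $k = q+t$, so that $i+j+k = n$ with $i,j,k \geq 1$, and the original summation variables run over $p \in \{1,\dotsc,j\}$, $q \in \{1,\dotsc,k\}$, $r = i-1$, $s = j-p$, $t = k-q$. Collecting the terms that produce the monomial $x_i x_j x_k$ gives
\begin{equation*}
  A_n^{(3)} = \sum_{\substack{i,j,k \,\geq\, 1 \\ i+j+k = n}} x_i x_j x_k \sum_{q=1}^{k} \sum_{p=1}^{j} \binom{n-p-q-1}{i-1,\, j-p,\, k-q}.
\end{equation*}
The goal is therefore to show that the inner double sum equals $\frac{n-q-1}{n-q-j}\binom{n-q-2}{i-1, j-1, k-q}$ summed over $q$ only.

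The main computation is the evaluation, for fixed $q$, of $S(q) \defn \sum_{p=1}^{j} \binom{n-p-q-1}{i-1,\,j-p,\,k-q}$. Setting $u = j-p$ and abbreviating $a = i-1$, $b = k-q$, this becomes $\frac{1}{a!\,b!}\sum_{u=0}^{j-1}\frac{(a+b+u)!}{u!} = \binom{a+b}{a}\sum_{u=0}^{j-1}\binom{a+b+u}{a+b}$, and the hockey-stick identity $\sum_{u=0}^{j-1}\binom{a+b+u}{a+b} = \binom{a+b+j}{a+b+1}$ gives $S(q) = \binom{i+k-q-1}{i-1}\binom{n-q-1}{j-1}$, using $a+b+j = n-q-1$ and $a+b+1 = i+k-q$. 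A short rearrangement, multiplying and dividing by $(i+k-q)$ (which equals $n-q-j$), rewrites this as $\frac{n-q-1}{n-q-j}\binom{n-q-2}{i-1,j-1,k-q}$, completing the proof.

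The only non-routine step is recognising the inner sum as a hockey-stick; once the indices are reorganised this is immediate, so I anticipate no real obstacle beyond careful bookkeeping of the substitution $i = 1+r$, $j = p+s$, $k = q+t$ and of the resulting ranges of the summation indices.
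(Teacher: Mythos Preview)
Your argument is correct and follows exactly the route the paper takes: it reproduces the five-index expression for $A_n^{(3)}$ set up in the paragraph before the proposition, and then carries out in detail the step the paper leaves as ``after some algebra'' by substituting $i=1+r$, $j=p+s$, $k=q+t$ and collapsing the sum over $p$ with the hockey-stick identity. The bookkeeping (ranges of $p$ and $q$, the identity $n-q-j=i+k-q\ge 1$, and the rewriting of $\binom{i+k-q-1}{i-1}\binom{n-q-1}{j-1}$ as $\tfrac{n-q-1}{n-q-j}\binom{n-q-2}{i-1,j-1,k-q}$) is all sound.
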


The equation~\eqref{eq:third_degree} for the third degree term $A_n^{(3)}$ can be rewritten into a formula for $Z_{\aperms}(p_1 + p_2 + p_3)$, \ie, the number of permutations of $[n+1]$ that start with $1$, end with $n+1$ and have three runs of lengths $p_1, p_2, p_3$, by changing the first sum to a sum over $i, j, k \in \{ p_1, p_2, p_3 \}$.
In particular, this gives rise to three integer series for the special cases
\begin{equation*}
  Z_{\aperms}(n+n+n), \quad Z_{\aperms}(1+n+n), \quad Z_{\aperms}(1+1+n),
\end{equation*}
with $n \in \NN$.

The first series
\begin{align*}
  Z_{\aperms}(n+n+n)
  & = \sum_{q=1}^n \frac{3n-q-1}{2n-q}\, \binom{3n-q-2}{n-1,n-1,n-q}\\
  & = 1, 11, 181, 3499, 73501, 1623467, \dotsc \qquad (n \geq 1)
\end{align*}
gives the number of atomic permutations with three runs of equal length $n$.
It does not appear to be known in the literature nor can it be found in the OEIS \cite{oeis} and the authors were not able to express it in a closed form.
For the second series, however, a simple closed form can be found:
\begin{align*}
  Z_{\aperms}(1+n+n)
  & = \sum_{q=1}^n \bigg( \binom{2n-q}{n-1} + \binom{2n-q-1}{n-1} \bigg) + \frac{1}{2}\, \binom{2n}{n}\\
  & = 2\, \binom{2n}{n} - 1
    = 11, 39, 139, 503, 1847, \dotsc, \qquad (n \geq 2)
\end{align*}
is the number of atomic permutations in $\aperms^\pm_{2n+2}$ with two runs of length $n$. One may understand this directly: there are $\binom{2n}{n}$
permutations in which the length $1$ run is between the others and $\binom{2n}{n}-1$ in which it is either first or last.
The third series, $Z_{\aperms}(1+1+n)$, \ie, the number of atomic permutations in $\aperms^\pm_{n+3}$ with two runs of length $1$, is given by the odd numbers bigger than $3$:
\begin{equation*}
  Z_{\aperms}(1+1+n) = 2n + 1 = 5, 7, 9, 11, 13, 15, \dotsc, \qquad (n \geq 2).
\end{equation*}

Observe that terms of the form $x_1^n$ in $A_n$ encode alternating permutations,
which were already investigated by André in the 1880's \cite{andre:1881}.
As a consequence of his results, we find that the alternating atomic permutations are enumerated by the \emph{secant numbers} $S_n$, the coefficients of the Maclaurin series of $\sec x = S_0 + S_1 x^2/2! + S_2 x^4/4! + \dotsb$,
\begin{equation*}
  Z_{\aperms}\bigg(\sum_{i=1}^{2n+1} 1\bigg) = S_n = 1, 1, 5, 61, 1385, 50521, \dotsc  \quad \text{($n \geq 0$, OEIS series \oeis{A000364})}.
\end{equation*}
This is due to the fact that all alternating atomic permutations of $[2n]$ can be understood as the reverse alternating permutations of $[2 \lddots 2n-1]$ with a prepended $1$ and an appended $2n$.
Moreover, since any $x_1^{2n+1}$ can only be produced through an application of $\mathcal{D}$ on $x_2 x_1^{2(n-1)}$, we also have $Z_{\aperms}\big(2 + \sum_{i=1}^{2(n-1)} 1\big) = S_n$.


\section{Circular permutations}
\label{sec:circular}

The methods developed in the last section to enumerate atomic permutations can also be applied to find the number of circular permutations~$Z_{\cperms}(p)$ with a given run structure~$p$.
Indeed, any circular permutation in $\cperms_{n-1}$ can be extended to a permutation in $\cperms_{n}$ by inserting $n$ at any position after the first (\eg, \cperm{14532} can be extended to \cperm{164532}, \cperm{146532}, \cperm{145632}, \cperm{145362} or \cperm{145326}).
As in the case of atomic permutations, this extension either increases the length of a run or splits a run into three runs.
Namely, we can increase the length of one run by inserting $n$ at the end or the penultimate position of an increasing run or we can split a run of length $i+j \geq 2$ into three runs of lengths $i, j$ and $1$ by inserting $n$ after $i$ elements of an increasing run or after $i+1$ elements of a decreasing run.

We introduce polynomials $C_n$ representing the run structures of all elements of $\cperms_n$, by analogy with the polynomials $A_n$ in the previous section:
\begin{equation}\label{eq:C_rel_Z}
  C_n = \sum_{p \vdash n} Z_\cperms(p) \prod_{i=1}^n x_i^{p(i)}
\end{equation}
and we say that the polynomials~$C_n$ enumerate the run structure of the circular permutations.
In the last paragraph we saw that we can use the differential operator $\mathcal{D}$ introduced in~\eqref{eq:diff} to find a recurrence relation similar to~\eqref{eq:A_recurrence}.
Namely,
\begin{subequations}\label{eq:C_recurrence}
  \begin{align}
    C_2 & \defn x_1^2, \\
    C_n & \defn \mathcal{D} C_{n-1}, \qquad (n \geq 3)
  \end{align}
\end{subequations}
giving in particular
\begin{align*}
  C_3 & = 2 x_2 x_1 \\
  C_4 & = 2 x_2^2 + 2 x_3 x_1 + 2 x_1^4 \\
  C_5 & = 2 x_4 x_1 + 6 x_3 x_2 + 16 x_1^3 x_2 \\
  C_6 & = 2 x_5 x_1 + 8 x_4 x_2 + 6 x_3^2 + 62 x_1^2 x_2^2 + 26 x_1^3 x_3 + 16 x_1^6
\end{align*}
from which we can read off that there are $2$ permutations in $\cperms_5$ corresponding to $5 = 4 + 1$, $6$ corresponding to the partition $5 = 3 + 2$ and $16$ corresponding to $5 = 2 + 1 + 1 + 1$.
As a check, we note that $6 + 16 + 2 = 24$, which is the total number of elements of $\cperms_5$; similarly, the coefficients in the expression for $C_6$ sum to $120$, the cardinality of $\cperms_6$.
More on this can be found in the last paragraph of section \ref{sec:counting_valleys}.

In summary, we have a result analogous to proposition~\ref{prop:Z_A}:
\begin{proposition}\label{prop:Z_C}
  The number $Z_\cperms(p)$ of circular permutations of length $n$ corresponding to a given run structure $p$ is determined by the polynomial $C_n$ via~\eqref{eq:C_rel_Z}.
  The polynomials~$C_n$ satisfy the recurrence relation~\eqref{eq:C_recurrence}.
\end{proposition}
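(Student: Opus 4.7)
The plan is to mimic the proof of Proposition~\ref{prop:Z_A} by introducing the polynomial
\begin{equation*}
  \widetilde C_n \defn \sum_{p \vdash n} Z_{\cperms}(p) \prod_{i=1}^n x_i^{p(i)}
\end{equation*}
defined directly from the combinatorial quantity $Z_{\cperms}$, and then showing that $\widetilde C_n$ satisfies the recursion~\eqref{eq:C_recurrence}. Since that recursion uniquely determines its solution, this forces $\widetilde C_n = C_n$, from which both claims of the proposition follow.

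For the base case, $\cperms_2$ consists of the single circular permutation $\dot 1 \dot 2$, which has two runs of length~$1$ (one ascent and one descent); hence $\widetilde C_2 = x_1^2$.

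For the inductive step, I would set up the bijection between $\cperms_n$ and the set of pairs $(\sigma, g)$, where $\sigma \in \cperms_{n-1}$ and $g$ is one of the $n-1$ gaps into which $n$ may be inserted in $\sigma$; the inverse map $\tau \mapsto (\sigma,g)$ records the position from which the largest element $n$ is deleted. Since $n$ is the largest element of $\tau$ it becomes a peak, so the effect of each insertion on the run structure is governed by the same local modifications as in Section~\ref{sec:atomic}: within a run of length~$\ell$ there are $\ell$ internal gaps; $\ell - 1$ of them split the run into three runs of lengths $i$, $1$, $j$ with $i + j = \ell$ and $i, j \geq 1$, while the remaining one elongates a neighboring run by~$1$. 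In detail, the elongation gap of an increasing run sits between its penultimate element and its peak and extends the subsequent decreasing run, while that of a decreasing run sits between its peak and its second element and extends the preceding increasing run. Each run of length~$i$ of~$\sigma$ is therefore elongated by exactly one of the $n-1$ insertions and split by $i-1$ of them. Summing the monomial contributions across the runs of $\sigma$ yields precisely the action of $\mathcal{D}_0 + \mathcal{D}_+ = \mathcal{D}$ on the monomial of~$\sigma$; summing then over $\sigma \in \cperms_{n-1}$ delivers $\widetilde C_n = \mathcal{D}\widetilde C_{n-1}$.

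The main obstacle is the local bookkeeping: one must verify that the $n-1$ insertion sites of~$\sigma$ distribute as $\ell - 1$ splits and $1$ elongation per run of length~$\ell$, and that each run of length~$i$ receives exactly one elongation (arising from the elongation gap of its neighbor across the shared peak). Once this partition of gaps is checked, the identification of the insertion sum with $\mathcal{D}$ follows immediately from the definitions~\eqref{eq:diff} and mirrors the corresponding step in the atomic case.
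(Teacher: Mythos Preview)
Your argument is correct and follows essentially the same approach as the paper: both establish the recursion by inserting the largest element~$n$ into a circular permutation of $[n-1]$ in each of the $n-1$ available gaps, and identify the resulting local modifications of the run structure with the action of $\mathcal{D}=\mathcal{D}_0+\mathcal{D}_+$. Your bookkeeping (assigning one elongation gap to every run, increasing or decreasing) is a harmless reparametrisation of the paper's description (which attributes both elongation cases to positions in increasing runs), and your explicit verification that each run receives exactly one elongation and $\ell-1$ splits makes the identification with $\mathcal{D}$ a little more transparent than the paper's informal prose.
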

\noindent
Note that circular permutations, exactly opposite to atomic permutations, always contain an even number of runs and thus $Z_\cperms(p)$ is zero for odd partitions $p$.

The enumeration of circular and atomic permutations is closely related.
In fact, introducing a generating function $\mathcal{C}$ as the formal series
\begin{equation*}
  \mathcal{C}(\lambda)
  \defn \sum_{n=0}^\infty C_{n+2} \frac{\lambda^n}{n!}
  = \sum_{n=0}^\infty \mathcal{D}^n C_2 \frac{\lambda^n}{n!}
  = \exp(\lambda \mathcal{D}) C_2,
\end{equation*}
one can show the following:
\begin{proposition}\label{prop:square}
  The formal power series $\mathcal{C}$ is the square of a formal series $\mathcal{A}$; namely,
  \begin{equation}\label{eq:square}
    \mathcal{C}(\lambda) = \mathcal{A}(\lambda)^2 = \big( \exp(\lambda \mathcal{D}) A_1 \big)^2,
  \end{equation}
  where $A_1 \defn x_1$.
\end{proposition}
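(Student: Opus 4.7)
The plan is to observe that the operator $\mathcal{D}$ defined in~\eqref{eq:diff} is a derivation on the formal polynomial ring $\ZZ[x_1, x_2, \dotsc]$, and then use the standard fact that the formal exponential of a derivation is an algebra homomorphism. Combined with $A_1 = x_1$ and $C_2 = x_1^2 = A_1^2$, this immediately yields~\eqref{eq:square}.

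First I would verify that $\mathcal{D}$ is a derivation. Each summand of $\mathcal{D}_0$ has the form $x_{i+1}\,\partial/\partial x_i$, which is a polynomial multiple of a partial derivative and hence satisfies the Leibniz rule; similarly each summand $x_1 x_i x_j\,\partial/\partial x_{i+j}$ of $\mathcal{D}_+$ is a derivation. A sum of derivations is a derivation, so $\mathcal{D}(fg) = (\mathcal{D}f)g + f(\mathcal{D}g)$ for all $f,g$ in the polynomial ring.

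Next I would upgrade this to the exponential. Iterating the Leibniz rule gives
\begin{equation*}
  \mathcal{D}^n(fg) = \sum_{k=0}^n \binom{n}{k}\, \mathcal{D}^k(f)\, \mathcal{D}^{n-k}(g),
\end{equation*}
which, after multiplying by $\lambda^n/n!$ and summing over $n$, rearranges to the Cauchy product
\begin{equation*}
  \exp(\lambda \mathcal{D})(fg) = \bigl(\exp(\lambda \mathcal{D}) f\bigr)\, \bigl(\exp(\lambda \mathcal{D}) g\bigr).
\end{equation*}
There are no convergence worries because, for each fixed monomial, only finitely many terms in the expansion of $\mathcal{D}^n(fg)$ contribute to any given degree, so the identity holds coefficient-wise in $\ZZ[x_1,x_2,\dotsc][[\lambda]]$.

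Finally, applying this with $f = g = x_1 = A_1$ and using $C_2 = x_1^2$ gives
\begin{equation*}
  \mathcal{C}(\lambda) = \exp(\lambda \mathcal{D})\, x_1^2 = \bigl(\exp(\lambda \mathcal{D})\, x_1\bigr)^2 = \mathcal{A}(\lambda)^2,
\end{equation*}
which is the desired identity. The only nontrivial step is confirming that $\mathcal{D}_+$ really does act as a derivation despite its quadratic coefficient $x_1 x_i x_j$; this is why it is important that each summand is still a \emph{single} partial derivative multiplied by a polynomial, rather than a more general second-order operator.
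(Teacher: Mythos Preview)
Your proof is correct, but it follows a different route from the paper's. The paper treats $\mathcal{C}$ as the solution of the first-order PDE $\partial\mathcal{C}/\partial\lambda = \mathcal{D}\mathcal{C}$ with initial data $C_2 = x_1^2$, and applies the method of characteristics: along each characteristic curve $\chi_\bullet(\mu)$ the solution is constant, so $\mathcal{C}(\lambda)|_{x_\bullet} = \chi_1(0)^2$, and likewise $\mathcal{A}(\lambda)|_{x_\bullet} = \chi_1(0)$, whence $\mathcal{C} = \mathcal{A}^2$. Your argument is purely algebraic: you observe that $\mathcal{D}$, being a sum of terms of the form $(\text{polynomial})\cdot\partial/\partial x_k$, is a derivation, and then the iterated Leibniz rule immediately gives that $\exp(\lambda\mathcal{D})$ is multiplicative, so $\exp(\lambda\mathcal{D})(x_1^2) = (\exp(\lambda\mathcal{D})x_1)^2$. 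This is more elementary and avoids any appeal to characteristics; it is essentially the observation that the flow of a vector field acts by algebra automorphisms. Amusingly, the paper itself remarks just after its proof that ``the argument of proposition~\ref{prop:square} proves rather more: namely, that $\exp(\lambda\mathcal{D})$ defines a ring homomorphism''---you have taken that as the \emph{starting point} rather than a corollary, which is arguably the cleaner way round. The paper's PDE viewpoint, on the other hand, connects more directly to the analytic manipulations used later (e.g.\ in deriving Kitaev's generating function).
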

\begin{proof}
  This may be seen in various ways, but the most convenient is to study the first-order partial differential equation (in infinitely many variables)
  \begin{equation}\label{eq:pde}
    \frac{\partial\mathcal{C}}{\partial \lambda} - \mathcal{D}\mathcal{C} = 0,
    \quad
    \mathcal{C}(0) = C_2
  \end{equation}
  satisfied by $\mathcal{C}$.

  We can now apply the method of characteristics to this problem.
  Since it has no inhomogeneous part, the p.d.e.~\eqref{eq:pde} asserts that $\mathcal{C}$ is constant along its characteristics.
  So, given $\lambda$ and $x_1, x_2, \dotsc$, let $\chi_1(\mu), \chi_2(\mu), \dotsc$ be solutions to the characteristic equations with $\chi_r(\lambda) = x_r$, \ie, $\chi_1(\mu), \chi_2(\mu), \dotsc$ are the characteristic curves which emanate from the point $(\lambda, x_1, x_2, \ldots)$.
  Then,
  \begin{equation*}
    \mathcal{C}(\lambda)|_{x_\bullet} = \mathcal{C}(0)|_{\chi_\bullet(0)} = C_2 \big( \chi_1(0) \big) = \chi_1(0)^2.
  \end{equation*}
  Applying the same reasoning again to $\mathcal{A}$, which obeys the same p.d.e.\ as $\mathcal{C}$ but with initial condition $\mathcal{A}(0) = A_1$,
  \begin{equation*}
    \mathcal{A}(\lambda)|_{x_\bullet} = \mathcal{A}(0)|_{\chi_\bullet(0)} = A_1 \big( \chi_1(0) \big) = \chi_1(0).
  \end{equation*}
  Therefore, proposition~\ref{prop:square} follows by patching these two equations together.
\end{proof}
\noindent
As a consequence also the polynomials $A_n$ and $C_n$ are related via
\begin{equation}\label{eq:relation}
  C_n = \sum_{m=1}^{n-1} \binom{n-2}{m-1}\, A_m A_{n-m}.
\end{equation}
It then follows that the second degree part of $C_n$ is given by
\begin{equation*}
  C^{(2)}_n = \sum_{m=1}^{n-1} \binom{n-2}{m-1}\, x_m x_{n-m}
\end{equation*}
and, applying \eqref{eq:third_degree}, that the fourth degree part can be written as
\begin{equation*}
  C_n^{(4)} = \sum_{\substack{i, j, k, l \,\geq\, 1 \\ i + j + k + l = m}} \sum_{q=1}^{k} 2\, \frac{n-l-q-1}{n-l-q-j}\, \binom{n-2}{n-l-1} \binom{n - l - q - 2}{i - 1, j - 1, k - q}\, x_i x_j x_k x_l.
\end{equation*}

Similar to the atomic permutations, we find that the alternating circular permutations satisfy (\cf \cite[\S41]{andre:1895})
\begin{equation*}
  Z_{\cperms}\bigg(\sum_{i=1}^{2n} 1\bigg) = T_n = 1, 2, 16, 272, 7936, 353792, \dotsc \quad \text{($n \geq 1$, OEIS series \oeis{A000182})}
\end{equation*}
and also $Z_{\cperms}\big(2 + \sum_{i=1}^{2n-3} 1\big) = T_n$, where $T_n$ are the \emph{tangent numbers}, the coefficients of the Maclaurin series of $\tan x = T_1 x_1 + T_2 x_3/3! + T_3 x_5/5! + \dotsb$.
Furthermore, from \eqref{eq:relation} we find the relation
\begin{equation*}
  T_{n+1} = \sum_{m=0}^n \binom{2n}{2m}\, S_m S_{n-m},
\end{equation*}
which can be traced back to $\tan' x = \sec^2 x$.

To conclude this section, we note that the argument of proposition~\ref{prop:square} proves rather more: namely, that $\exp(\lambda \mathcal{D})$ defines a ring homomorphism from $\CC[x_1, x_2, \dotsc]$ to the ring of formal power series $\CC[[x_1, x_2, \dotsc]]$. This observation can be used to accelerate computations:
for example, the fact that $A_3 = x_3 +x_1^3$ implies that
\begin{equation*}
\mathcal{A}''(\lambda) = \mathcal{A}(\lambda)^3 + \exp(\lambda\mathcal{D})x_3,
\end{equation*}
which reduces computation of $A_{n+3}=\mathcal{D}^{n+2}x_1$ to the computation of $\mathcal{D}^n x_3$. Once $\mathcal{A}$ is obtained,
we may of course determine $\mathcal{C}$ by squaring.


\section{Linear permutations}
\label{sec:linear}

In the last section we studied the run structures of circular permutations $\cperms_n$ and discovered that their run structures can be enumerated by the polynomials $A_n$.
One might ask, what the underlying reason for this is.
Circular permutations of $[n]$ have the same run structure as the linear permutations of the multiset $\{1,1,2,\dotsc,n\}$ which begin and end with $1$.
These permutations can then be split into two atomic permutations at the occurrence of their maximal element.
For example, the circular permutation \cperm{14532} can be split into the two atomic permutations \perm{145} of $\{1,4,5\}$ and \perm{5321} of $\{1,2,3,5\}$.
This also gives us the basis of a combinatorial argument for the fact that $\mathcal{C} = \mathcal{A}^2$.
Similarly it is in principle possible to encode the run structures of any subset of permutations using the polynomials $A_n$.
The goal of this section is to show how this may be accomplished for $\perms_S$ for any $S \subset \NN$.

As in sections~\ref{sec:atomic} and~\ref{sec:circular}, we want to find polynomials
\begin{equation*}
  L_n = \sum_{p \vdash n} Z_\perms(p) \prod_{i=1}^n x_i^{p(i)}
\end{equation*}
that enumerate the run structure of the permutations~$\perms_{n+1}$.
This may be achieved in a two step procedure.
Since every permutation has a unique decomposition into inextendible atomic permutations, we can enumerate the set of permutations according to this decomposition.
The enumeration of permutations by their run structure follows because the enumeration of atomic permutations has already been achieved in section~\ref{sec:atomic}.

The key to our procedure is to understand the factorisation of the run structure into those of atomic permutations. Considering $\sigma \in \perms_n$ as a word, we can write it as the concatenation $\sigma = \alpha \cdot \pi \cdot \omega$, where $\pi$ is the principal atom of~$\sigma$ (see beginning of section~\ref{sec:atomic}) and $\alpha, \omega$ are (possibly empty) subwords of~$\sigma$.
Since the decomposition of~$\sigma$ into its atoms also decomposes its run structure, the complete runs of~$\sigma$ are determined by the runs of
$\alpha \cdot 1$, $\pi$ and $n \cdot \omega$ if $\pi$ is rising, or of
$\alpha \cdot n$, $\pi$ and $1 \cdot \omega$ if $\pi$ is falling.

Let $S_\omega$ be the set of letters in $\omega$ and define $\rho: S_\omega \to S_\omega$ to be the involution mapping the $i$'th smallest element of $S_\omega$ to the $i$'th largest, for all $1 \leq i \leq |S_\omega|$.
Then the run structure of $n \cdot \omega$ is identical to that of $1 \cdot \rho(\omega)$, where $\rho(\omega)$ is obtained by applying $\rho$ letterwise to $\omega$.
Furthermore, in the case $\pi = 1 \,\dotsm\, n$, the combined run structures of $\alpha \cdot 1$ and $n \cdot \omega$ are precisely the run structure of $\alpha \cdot 1 \cdot \rho(\omega)$, while, if $\pi = n \,\dotsm\, 1$, the combined run structures of $\alpha \cdot n$ and $1 \cdot \omega$ precisely form the run structure of $\alpha\cdot n \cdot \rho(\omega)$.
We refer to $\alpha\cdot 1\cdot\rho(\omega)$ or $\alpha\cdot n\cdot\rho(\omega)$ as the \emph{residual permutation}.

Summarising, the run structure of~$\sigma$ may be partitioned into that of~$\pi$ and either $\alpha \cdot 1 \cdot \rho(\omega)$ or $\alpha \cdot n \cdot \rho(\omega)$; accordingly, the monomial for~$\sigma$ factorises into that for the principal atom~$\pi$ and that for the residual permutation.
Therefore, the polynomial enumerating linear permutations by run structure can be given in terms of the those enumerating atomic permutations of the same or shorter length and of linear permutations of strictly shorter length.

This argument can be used to give a recursion relation for~$L_n$, which enumerates permutations of $[n+1]$ by their run structure.
Taking into account that the principal atom consists of $m+1$ letters, where $1 \leq m\leq n$, of which $m-1$ may be chosen freely from the set $[2 \lddots n]$, and that it might be rising or falling, and that the residual permutation may be any linear permutation on a set of cardinality $n-m+1$, we obtain the recursion relation
\begin{equation*}
  L_n = 2 \sum_{m=1}^{n} \binom{n-1}{m-1} A_m L_{n-m},
  \qquad
  L_0=1.
\end{equation*}
Passing to the generating function,
\begin{equation*}
 \mathcal{L}(\lambda) \defn \sum_{n=0}^\infty L_n \frac{\lambda^n}{n!},
\end{equation*}
we may deduce that
\begin{equation}\label{eq:linear_de}
  \frac{\partial\mathcal{L}}{\partial\lambda} = 2\mathcal{A}(\lambda)\mathcal{L}(\lambda).
\end{equation}
Our main result in this section is:
\begin{proposition}\label{prop:linear}
  The run structure of all permutations in $\perms_{n+1}$ is enumerated by
  \begin{equation}\label{eq:linear}
    L_n = \sum_{p \vdash n} \frac{2^{\abs{p}}}{\ord p}\, \binom{n}{p} \prod_{i=1}^{\abs{p}} A_{p_i},
    \quad
    L_0 = 1,
  \end{equation}
  where the sum is over all partitions $p = p_1 + p_2 + \dotsb$ of~$n$, $\abs{p}$ is the number of parts of partition, $\ord p$ is the symmetry order of the parts of~$p$ (\eg, for $p = 1+1+2+3+3$ we have $\ord p = 2! 2!$) and $\binom{n}{p}$ is the multinomial with respect to the parts of~$p$.
  The generating function for the~$L_n$ is
  \begin{equation}\label{eq:linear_gen}
    \mathcal{L}(\lambda) \defn \sum_{n=0}^\infty L_n \frac{\lambda^n}{n!}
    = \exp \left( 2 \int_0^\lambda \mathcal{A}(\mu)\, \dif\mu\right).
  \end{equation}
\end{proposition}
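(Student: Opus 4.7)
The plan is to derive both claims directly from the first-order ODE \eqref{eq:linear_de} together with the initial value $\mathcal{L}(0)=L_0=1$, both of which are already established in the discussion preceding the proposition's statement. For the generating function expression \eqref{eq:linear_gen}, I would observe that $\mathcal{L}$ has nonzero constant term and is therefore invertible as a formal power series in $\lambda$, so one may divide \eqref{eq:linear_de} through by $\mathcal{L}$ to obtain $\partial_\lambda\log\mathcal{L}(\lambda)=2\mathcal{A}(\lambda)$; integrating and exponentiating, using $\mathcal{L}(0)=1$, then yields \eqref{eq:linear_gen} immediately.

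For the explicit formula \eqref{eq:linear}, term-by-term integration of the defining series $\mathcal{A}(\mu)=\sum_{n\geq 0}A_{n+1}\mu^n/n!$ gives
\begin{equation*}
  \int_0^\lambda \mathcal{A}(\mu)\,\dif\mu = \sum_{m\geq 1} A_m\,\frac{\lambda^m}{m!}.
\end{equation*}
Expanding the exponential in \eqref{eq:linear_gen} as $\sum_{k\geq 0}(2^k/k!)\bigl(\sum_{m\geq 1}A_m\lambda^m/m!\bigr)^k$ and extracting the coefficient of $\lambda^n/n!$ produces
\begin{equation*}
  L_n = \sum_{k\geq 0}\frac{2^k}{k!}\sum_{\substack{p_1,\dotsc,p_k \,\geq\, 1 \\ p_1+\dotsb+p_k=n}}\binom{n}{p_1,\dotsc,p_k}\prod_{i=1}^k A_{p_i},
\end{equation*}
a sum over ordered compositions of $n$ into positive parts. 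I would then regroup by the underlying unordered partition: each partition $p\vdash n$ with $k=\abs{p}$ parts corresponds to exactly $k!/\ord p$ such ordered compositions, all contributing the same summand, and the inner sum therefore collapses to the expression asserted in \eqref{eq:linear}.

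Since the genuine combinatorial content has already been absorbed into the derivation of the recurrence and the ODE just prior to the statement, what remains is purely formal power series manipulation. The only thing that needs care is the bookkeeping between ordered compositions and unordered partitions, which is handled routinely by the orbit-counting factor $k!/\ord p$; no real obstacle is expected. As a sanity check, one may directly verify that the formula \eqref{eq:linear} satisfies the recurrence $L_n=2\sum_{m=1}^n\binom{n-1}{m-1}A_m L_{n-m}$ by isolating the part $p_1$ of the partition corresponding to the principal atom.
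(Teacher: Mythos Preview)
Your proposal is correct and follows the paper's own route almost verbatim: the paper also derives \eqref{eq:linear_gen} directly from the ODE \eqref{eq:linear_de} with $\mathcal{L}(0)=1$, and then obtains \eqref{eq:linear} by invoking Fa\`a di Bruno's formula. Your expansion of the exponential followed by the compositions-to-partitions regrouping is precisely Fa\`a di Bruno's formula for $\exp\!\big(2\sum_{m\ge 1}A_m\lambda^m/m!\big)$ written out explicitly, so the two arguments are the same in substance.
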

\begin{proof}
  Equation~\eqref{eq:linear_gen} follows immediately from \eqref{eq:linear_de}, as $\mathcal{L}(0)=1$, whereupon Faà di Bruno's formula~\cite[Eq.~(1.4.13)]{olver:2010} yields~\eqref{eq:linear}.
\end{proof}

To conclude this section, we remark that the first few $L_n$ are given by
\begin{align*}
  L_1 & = 2 A_1 \\
  L_2 & = 4 A_1^2 + 2 A_2 \\
  L_3 & = 8 A_1^3 + 12 A_1 A_2 + 2 A_3 \\
  L_4 & = 16 A_1^4 + 48 A_1^2 A_2 + 12 A_2^2 + 16 A_1 A_3 + 2 A_4 \\
  L_5 & = 32 A_1^5 + 160 A_1^3 A_2 + 120 A_1 A_2^2 + 80 A_1^2 A_3 + 40 A_2 A_3 + 20 A_1 A_4 + 2 A_5\\
  L_6 &= 64 A_1^6 + 480 A_1^4 A_2 + 320 A_1^3 A_3 + 720 A_1^2 A_2^2 + 120 A_1^2 A_4 + 480 A_1 A_2 A_3 + 120 A_2^3 \\&\qquad + 24 A_1 A_5 +
60 A_2 A_4 + 40 A_3^2 + 2A_6.
\end{align*}
Expanding the $A_k$ and writing the $L_n$ instead in terms of $x_i$, we obtain from these
\begin{align*}
  L_1 & = 2 x_1 \\
  L_2 & = 4 x_1^2 + 2 x_2 \\
  L_3 & = 10 x_1^3 + 12 x_1 x_2 + 2 x_3 \\
  L_4 & = 32 x_1^4 + 58 x_1^2 x_2 + 12 x_2^2 + 16 x_1 x_3 + 2 x_4 \\
  L_5 & = 122 x_1^5 + 300 x_1^3 x_2 + 142 x_1 x_2^2 + 94 x_1^2 x_3 + 40 x_2 x_3 + 20 x_1 x_4 + 2 x_5\\
  L_6 & = 544 x_1^6 + 1682 x_1^4 x_2 + 568x_1^3 x_3 + 1284 x_1^2 x_2^2 + 138 x_1^2 x_4 + 556 x_1 x_2 x_3 + 142 x_2^3 \\&\qquad + 24 x_1 x_5 + 60 x_2 x_4 + 40 x_3^2 + 2x_6,
\end{align*}
which show no obvious structure, thereby making proposition \ref{prop:linear} that much more remarkable.


\section{Counting valleys}
\label{sec:counting_valleys}

Instead of enumerating permutations by their run structure, we can count the number of valleys of a given (circular) permutation.
Taken together, the terms $C_n$ involving a product of $2k$ of the $x_i$ relate precisely to the circular permutations $\cperms_n$ with $k$ valleys.
Since any circular permutation in $\cperms_n$ can be understood as a permutation of $[3 \lddots n+1]$ with a prepended $1$ and an appended $2$ (\cf beginning of section~\ref{sec:linear}), $C_n$ may also be used to enumerate the valleys of ordinary permutations of $[n-1]$.
Namely, terms of $C_{n+1}$ with a product of $2(k+1)$ variables $x_i$ relate to the permutations of $\perms_n$ with $k$ valleys (\ie, terms of $L_{n+1}$ which are a product of $2k$ of the $x_i$).

Let $V(n, k)$ count the number of permutations of $n$ elements with $k$ valleys.
Then we see that the generating function for $V(n, k)$ for each fixed $n \geq 1$ is
\begin{equation*}
  K_n(\kappa) \defn \sum_{k=1}^n \kappa^k V(n, k)
  = \frac{1}{\kappa} C_{n+1}(\sqrt{\kappa}, \dotsc, \sqrt{\kappa})
\end{equation*}
and we define $K_0(\kappa) \defn 1$.
The first few $K_n$ are
{\savebox\strutbox{$\vphantom{\kappa^2}$}
\begin{align*}
  K_1(\kappa) & = 1 \\
  K_2(\kappa) & = 2 \\
  K_3(\kappa) & = 4 + 2 \kappa \\
  K_4(\kappa) & = 8 + 16 \kappa \\
  K_5(\kappa) & = 16 + 88 \kappa + 16 \kappa^2 \\
  K_6(\kappa) & = 32 + 416 \kappa + 272 \kappa^2,
\end{align*}}%
which coincide with the results in~\cite{rieper:2000}.
In particular, the constants are clearly the powers of $2$, the coefficients of $\kappa$ give the sequence \oeis{A000431} of the OEIS~\cite{oeis} and the coefficients of $\kappa^2$ are given by the sequence \oeis{A000487}.
Likewise, the coefficients of $\kappa^3$ may be checked against the sequence \oeis{A000517}.
In fact, the same polynomials appear in André's work, in which he obtained a generating function
closely related to \eqref{eq:kitaev} below; see \cite[\S158]{andre:1895} (his final formula contains a number of sign errors, and is given in a form in which all quantities are real for $\kappa$ near $0$; there is also an offset, because his polynomial $A_n(\kappa)$ is our $K_{n-1}(\kappa)$).

\begin{proposition}
  The bivariate generating function, \ie, the generating function for arbitrary $n$, is
  \begin{equation*}
    \mathcal{K}(\nu, \kappa) = \sum_{n=0}^\infty K_n(\kappa) \frac{\nu^n}{n!} = 1 + \frac{1}{\kappa} \int_0^\nu \mathcal{C}(\mu)|_{x_1 = x_2 = \dotsb = \sqrt{\kappa}}\; \dif\mu
  \end{equation*}
  and is given in closed form by
  \begin{equation}\label{eq:kitaev}
    \mathcal{K}(\nu, \kappa) = 1 - \frac{1}{\kappa} + \frac{\sqrt{\kappa - 1}}{\kappa} \tan\big( \nu \sqrt{\kappa - 1} + \arctan(1/\sqrt{\kappa - 1}) \big).
  \end{equation}
\end{proposition}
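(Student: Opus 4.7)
The plan is to reduce the computation to a first-order linear PDE satisfied by the symmetric evaluation of $\mathcal{A}$, and then verify the stated closed form satisfies it.

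First, set $a(\nu, \kappa) \defn \mathcal{A}(\nu)|_{x_i = \sqrt\kappa}$. Since $\mathcal{C} = \mathcal{A}^2$ by proposition~\ref{prop:square} and $\mathcal{K}(0, \kappa) = 1$, the integral formula for $\mathcal{K}$ (which itself is immediate from the definition of $K_n$ and $\mathcal{K}$) gives $\kappa\,\partial_\nu\mathcal{K}(\nu, \kappa) = a(\nu, \kappa)^2$, so everything reduces to determining $a$.

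Next, I would derive a PDE for $a$ from $\partial_\nu \mathcal{A} = \mathcal{D}\mathcal{A}$ by evaluating at the symmetric point $x_i = t \defn \sqrt\kappa$. Two ingredients are used. Firstly, because $\mathcal{D}_0 = \sum_i x_{i+1}\partial_{x_i}$ merely shifts indices, $(\mathcal{D}_0 f)|_{\mathrm{sym}} = t\,\frac{d}{dt}(f|_{\mathrm{sym}})$ for any $f$. Secondly, since every $A_n$ is weighted-homogeneous of degree $n$ (with weight $k$ on $x_k$), Euler's identity $\sum_k k x_k \partial_{x_k} A_n = n A_n$, combined with $(\mathcal{D}_+ f)|_{\mathrm{sym}} = t^3 \sum_{k \geq 2}(k-1)(\partial_{x_k}f)|_{\mathrm{sym}}$, gives $(\mathcal{D}_+ A_n)|_{\mathrm{sym}} = n t^2 A_n|_{\mathrm{sym}} - t^3 \frac{d}{dt} A_n|_{\mathrm{sym}}$. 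Assembling these and switching from $t$ to $\kappa = t^2$ yields
\begin{equation*}
  (1 - \kappa\nu)\,\partial_\nu a = 2\kappa(1-\kappa)\,\partial_\kappa a + \kappa\, a,
  \qquad a(0, \kappa) = \sqrt\kappa,
\end{equation*}
where the initial condition follows from $A_1 = x_1$.

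Then I would verify by direct calculation that the ansatz $a(\nu, \kappa) = \sqrt{\kappa-1}\,\sec\theta$, with $\theta \defn \nu\sqrt{\kappa-1} + \arctan(1/\sqrt{\kappa-1})$, satisfies both the PDE and the initial condition; matching coefficients of $\nu^n$ shows that the PDE together with $a(0, \kappa)$ determines all $a_n(\kappa) \defn A_n|_{\mathrm{sym}}$ uniquely, so this ansatz really is $a$. Finally, integrating $\kappa\,\partial_\nu\mathcal{K} = a^2 = (\kappa-1)\sec^2\theta$ in $\nu$ gives the antiderivative $\sqrt{\kappa-1}\tan\theta$; evaluating from $0$ to $\nu$ (using $\tan(\arctan(1/\sqrt{\kappa-1})) = 1/\sqrt{\kappa-1}$) and rearranging yields $\mathcal{K}(\nu, \kappa) = 1 - 1/\kappa + (\sqrt{\kappa-1}/\kappa)\tan\theta$, as claimed.

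The main technical obstacle is the PDE derivation — translating the cubic, index-convoluted action of $\mathcal{D}_+$ at the symmetric point into a clean differential operator in only $\nu$ and $\kappa$. The Euler-homogeneity trick is essential, as it converts the otherwise intractable $\sum_k (k-1)\partial_{x_k}f|_{\mathrm{sym}}$ into a combination of $f|_{\mathrm{sym}}$ and its $t$-derivative, thereby closing the system so that a two-variable PDE for $a$ emerges.
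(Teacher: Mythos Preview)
Your argument is correct and complete, but it follows a somewhat different route from the paper's own proof. Both approaches rest on the weighted homogeneity of the run-structure polynomials (your Euler identity is exactly the infinitesimal form of the scaling relation the paper uses), and both arrive at first-order linear PDEs with the same characteristic curves $\nu\sqrt{\kappa-1}+\arctan(1/\sqrt{\kappa-1})=\mathrm{const}$. The difference lies in the choice of primary object: the paper works directly with $\mathcal{K}$, obtaining from the recurrence $K_n = 2\kappa(1-\kappa)K_{n-1}'+(2+(n-2)\kappa)K_{n-1}$ the inhomogeneous PDE
\[
(1-\nu\kappa)\,\partial_\nu\mathcal{K}+2\kappa(\kappa-1)\,\partial_\kappa\mathcal{K}+(\kappa-2)\mathcal{K}=\kappa-1,
\]
which it then solves constructively by the method of characteristics (particular integral $1-1/\kappa$, integrating factor, and finally identifying the arbitrary function as $\tan$ from the initial data). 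You instead exploit $\mathcal{C}=\mathcal{A}^2$ to pass to $a=\mathcal{A}|_{\mathrm{sym}}$, whose PDE is homogeneous; this makes the verification of the $\sqrt{\kappa-1}\sec\theta$ ansatz very clean, and the final integration $\int a^2\,\dif\nu$ is immediate because the antiderivative of $\sec^2$ is $\tan$. Your route is a bit shorter and makes the role of $\mathcal{A}$ explicit, at the cost of having to guess the ansatz; the paper's route actually discovers the form of the answer from the characteristics.
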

\noindent
This result was found by Kitaev~\cite{kitaev:2007} and in the remainder of this section we will show how it may be derived from the recurrence relation~\eqref{eq:C_recurrence} of $C_n$.

To this end, we first note that $C_{n+1}$ satisfies the useful scaling relation
\begin{equation*}
  \lambda^{n+1} C_{n+1}(x_1, x_2, \dotsc, x_n) = C_{n+1}(\lambda x_1, \lambda^2 x_2, \dotsc, \lambda^n x_n).
\end{equation*}
Setting $x_i = x / \lambda = \sqrt{\kappa}$ for all $i$, this implies
\begin{equation*}
  \lambda^{n+1} C_{n+1}(\sqrt{\kappa}, \dotsc, \sqrt{\kappa}) = C_{n+1}(x, \lambda x, \dotsc, \lambda^{n-1} x)
\end{equation*}
and we find, by inserting the recurrence relations~\eqref{eq:C_recurrence} and applying the chain rule, that with this choice of variables
\begin{equation*}
  \frac{1}{x^2} C_{n+1}(x, \lambda x, \dotsc, \lambda^{n-1} x) = \lambda x \pd{C_n}{x} + x^2 \pd{C_n}{\lambda} + 2 \lambda C_n.
\end{equation*}
Hence, in turn, $K_n(\kappa) = \kappa^{-1} C_{n+1}(\sqrt{\kappa}, \dotsc, \sqrt{\kappa})$ satisfies the recurrence relation
\begin{equation}\label{eq:Kn}
  K_n(\kappa) = 2 \kappa (1 - \kappa) K_{n-1}'(\kappa) + \big( 2 + (n - 2) \kappa \big) K_{n-1}(\kappa)
\end{equation}
for $n \geq 2$.
For the bivariate generating function $\mathcal{K}$ this, together with $K_0 = K_1 = 1$, implies the p.d.e.
\begin{equation*}
  (1 - \nu \kappa) \pd{\mathcal{K}}{\nu} + 2 \kappa (\kappa - 1) \pd{\mathcal{K}}{\kappa} + (\kappa - 2) \mathcal{K} = \kappa - 1,
\end{equation*}
which is to be solved subject to the initial condition $\mathcal{K}(0, \kappa) = 1$.

The above equation may be solved as follows: first, we note that there is a particular integral $1 - 1/\kappa$, so it remains to solve the homogeneous equation.
In turn, using an integrating factor, the latter may be rewritten as
\begin{equation}\label{eq:homog}
  (1 - \nu \kappa) \pd{}{\nu} \frac{\kappa \mathcal{K}}{\sqrt{\kappa - 1}} + 2 \kappa (\kappa - 1) \pd{}{\kappa} \frac{\kappa \mathcal{K}}{\sqrt{\kappa - 1}} = 0,
\end{equation}
for which the characteristics obey
\begin{equation*}
  \od{\nu}{\kappa} = \frac{1 - \nu \kappa}{2 \kappa (\kappa - 1)}.
\end{equation*}
Solving this equation, we find that
\begin{equation*}
  \nu \sqrt{\kappa - 1} + \arctan \frac{1}{\sqrt{\kappa - 1}} = \text{const}
\end{equation*}
along characteristics; as~\eqref{eq:homog} asserts that $\kappa \mathcal{K}/\sqrt{\kappa - 1}$ is constant on characteristics, this gives
\begin{equation*}
  \mathcal{K}(\nu, \kappa) = 1 - \frac{1}{\kappa} + \frac{\sqrt{\kappa - 1}}{\kappa} f\big( \nu \sqrt{\kappa - 1} + \arctan(1/\sqrt{\kappa - 1}) \big)
\end{equation*}
for some function $f$.
Imposing the condition $\mathcal{K}(0, \kappa) = 1$, it is plain that $f = \tan$, and we recover Kitaev's generating function~\eqref{eq:kitaev}.

To close this section, we note that \eqref{eq:Kn} has the consequence that $K_n(1) = n K_{n-1}(1)$ for all $n \geq 2$ and hence that $C_{n+1}(1,\dotsc,1) = K_n(1) = n!$ for such $n$, and indeed all $n\geq 1$, because $C_2(1, 1) = K_1(1) = 1$. The generating function obeys
\begin{equation*}
  \mathcal{C}(\lambda)|_{x_\bullet=1}
  = \sum_{n=0}^\infty (n+1)! \frac{\lambda^n}{n!}
  = (1-\lambda)^{-2}
\end{equation*}
for all non-negative $\lambda < 1$ from which it also follows that
\begin{equation}\label{eq:geom_A}
  \mathcal{A}(\lambda)|_{x_\bullet=1} = (1-\lambda)^{-1}
\end{equation}
(as $A_1(1) = 1$, we must take the \emph{positive} square root) and hence $A_{n}|_{x_\bullet=1} = (n-1)!$ for all $n \geq 1$.
This gives a consistency check on our results: the coefficients in the expression for $A_n$ sum to $(n-1)!$, the cardinality of $\aperms^\pm_{n+1}$, while those in $C_n$ sum to the cardinality of $\cperms_n$.
Furthermore, inserting \eqref{eq:geom_A} into the generating function $\mathcal{L}(\lambda)$ in \eqref{eq:linear_gen}, we find
\begin{equation*}
  \mathcal{L}(\lambda)|_{x_\bullet=1}
  = \sum_{n=0}^\infty L_n(1,\dotsc,1) \frac{\lambda^n}{n!}
  = (1-\lambda)^{-2},
\end{equation*}
and thus $L_{n+1}(1,\dotsc,1) = n!$, which is the cardinality of $\perms_n$.


\section{Other applications}
\label{sec:app}

The original motivation for this work arose in quantum field theory, in computations related to the probability distribution of measurement outcomes for quantities such as averaged energy densities~\cite{fewster:2012b}. One actually
computes the cumulants $\kappa_n$ ($n \in \NN$) of the distribution: $\kappa_1 = 0$, while for each $n \geq 2$, $\kappa_n$ is given as a sum indexed by circular permutations $\sigma$ of $[n]$ such that $\sigma(1) = 1$ and $\sigma(2) < \sigma(n)$, in which each permutation contributes a term that is a multiplicative function of its run structure:
\begin{equation*}
  \kappa_n = \sum_{\sigma} \Phi(\sigma)
\end{equation*}
where $\Phi(\sigma)$ is a product over the runs of $\sigma$, with each run of length $r$ contributing a factor $y_r$. Owing to the restriction $\sigma(2) < \sigma(n)$, precisely half of the circular permutations are admitted, and so $\kappa_n=\frac{1}{2}C_n(y_1,y_2,\ldots,y_n)$.
Thus the cumulant generating function is
\begin{align*}
  W(\lambda)
  &\defn \sum_{n=2}^\infty \kappa_n \frac{\lambda^n}{n!} = \frac{1}{2} \int_0^\lambda \dif\mu\, (\lambda - \mu) \mathcal{C}(\mu)|_{x_\bullet=y_\bullet} \\
  &= \frac{1}{2} \int_0^\lambda \dif\mu\, (\lambda - \mu) \mathcal{A}(\mu)|_{x_\bullet=y_\bullet}^2
\end{align*}
and the moment generating function is $\exp W(\lambda)$ in the usual way. This expression makes sense a formal power series, but also as
a convergent series within an appropriate radius of convergence.
The values of $y_n$ depend on the physical quantity involved and the way it is averaged.
In one case of interest
\begin{align*}
  y_n
  &= 8^n \int_{(\RR^+)^{\times n}} \dif k_1\, \dif k_2\, \cdots\, \dif k_n\, k_1 k_2 \,\cdots\, k_n  \exp \left[ -k_1-\left(\sum_{i=1}^{n-1} |k_{i+1}-k_i|\right)-k_{n} \right] \\
  &= 2^n \sum_{r_{n-1}=0}^{2} \sum_{r_{n-2}=0}^{2+r_{n-1}} \cdots \sum_{r_1=0}^{2+r_2} \prod_{k=1}^{n-1} (1+r_k) \\
  &=2, 24, 568, 20256, 966592, \dotsc \qquad (n \geq 1)
\end{align*}
(the sums of products must be interpreted as an overall factor of unity in the case $n=1$).
Numerical investigation leads to a remarkable identity
\begin{equation*}
  \mathcal{A}(\lambda)|_{x_\bullet=y_\bullet} = \frac{2}{1-12\lambda} \qquad \textrm{(conjectured)}
\end{equation*}
with exact agreement for all terms so far computed (checked up to $n=65$).
We do not have a proof for this statement, but the conjecture seems fairly secure. For example, we have shown above that $A_5= x_5 + 7 x_3 x_1^2 + 11 x_2^2 x_1 + 5 x_1^5$; substituting for $x_n$ the values of $y_n$ obtained above, we find $A_5= 995328$ which coincides with the fourth order coefficient in the expansion
\begin{equation*}
\frac{2}{1-12\lambda}=  2+ 24\lambda + 576\frac{\lambda^2}{2!}+20736\frac{\lambda^3}{3!} +995328\frac{\lambda^4}{4!}+O(\lambda^5).
\end{equation*}
In~\cite{fewster:2012b}, this conjecture was used to deduce
\begin{equation*}
  \exp\big(W(\lambda)\big) = \e^{-\lambda/6}(1-12\lambda)^{-1/72} \qquad \textrm{(conjectured)},
\end{equation*}
which is the moment generating function of a shifted Gamma distribution.
The other generating functions of interest, with these values for the $x_k$ are
\begin{equation*}
  \mathcal{C}(\lambda)|_{x_\bullet=y_\bullet} = \frac{4}{(1-12\lambda)^2},
  \qquad
  \mathcal{L}(\lambda)|_{x_\bullet=y_\bullet} = (1-12\lambda)^{-1/3}
  \qquad \textrm{(conjectured)}.
\end{equation*}
For example, we have $C_5 =  2 x_4 x_1 + 6 x_3 x_2 + 16 x_1^3 x_2  =
165888$ and $L_5 = 122 x_1^5 + 300 x_1^3 x_2 + 142 x_1 x_2^2 + 94 x_1^2 x_3 + 40 x_2 x_3 + 20 x_1 x_4 + 2 x_5 =3727360$, to be compared with the terms of order $\lambda^3$ and $\lambda^5$, respectively, in
the expansions
\begin{align*}
\frac{4}{(1-12\lambda)^2} &= 4+ 96\lambda+ 3456\frac{\lambda^2}{2!}+ 165888\frac{\lambda^3}{3!}+ 995328\frac{\lambda^4}{4!}+ O(\lambda^5), \\
 (1-12\lambda)^{-1/3} &=
1+ 4\lambda+ 64\frac{\lambda^2}{2!}+ 1792\frac{\lambda^3}{3!}+ 71680\frac{\lambda^4}{4!}+ 3727360\frac{\lambda^5}{5!}+O(\lambda^6).
\end{align*}

A natural question is whether there are other sequences that can be substituted for the $x_k$ to produce generating functions with simple closed forms.
To close, we give three further examples, with the corresponding generating functions computed.
The first has already been encountered in section \ref{sec:counting_valleys} and corresponds to the case $x_k = 1$ for all $k \in \NN$.

The second utilizes the alternating Catalan numbers: setting
\begin{equation*}
  x_{2k+1}= \frac{(-1)^k}{k+1} \binom{2k}{k}, \quad(k \geq 0),
  \qquad
  x_{2k}=0, \quad (k \geq 1)
\end{equation*}
and thus $A_{2k} = 0$, we obtain, again experimentally,
\begin{equation*}
  \mathcal{C}(\lambda) = \mathcal{A}(\lambda) = 1,
  \quad
  \mathcal{L}(\lambda) = \e^{2\lambda}
  \qquad \textrm{(conjectured)}
\end{equation*}
with exact agreement checked up to permutations of length $n = 65$.
For example, one sees easily that with $x_1=1$, $x_3=-1$, $x_5=2$ and $x_2=x_4=0$, the expressions $A_k$ and $C_k$ given in sections~\ref{sec:atomic} and~\ref{sec:circular} vanish for $2\le k\le 6$,
and have $A_1=C_1=1$, likewise, $L_k=2^k$ for $1\le k\le 6$.

Third, André's classical result on alternating permutations (\cf last and penultimate paragraph of section \ref{sec:atomic} and \ref{sec:circular} respectively) gives the following: setting
\begin{equation*}
  x_1=1, \qquad
  x_{k}=0, \quad (k \geq 2)
\end{equation*}
we have, using \eqref{eq:square} and \eqref{eq:linear_gen},
\begin{equation*}
  \mathcal{A}(\lambda) = \sec\lambda,
  \quad
  \mathcal{C}(\lambda)=\sec^2\lambda,
  \quad
  \mathcal{L}(\lambda) = (\sec\lambda + \tan\lambda)^2.
\end{equation*}
It seems highly likely to us that many other examples can be extracted from the structures we have described.

Moreover, we remark that it is possible to implement a merge-type sorting algorithm, called \emph{natural merge sort} \cite[Chap. 5.2.4]{knuth:1998}, based upon splitting permutations of an ordered set $S$ into its runs, which are ordered (alternatingly in ascending and descending order) sequences $S_i \subset S$.
Repeatedly merging these subsequences, one ultimately obtains an ordered sequence.
For example, first, we split the permutation \perm{542368719} into \perm{542}, \perm{368}, \perm{71} and \perm{9}.
Then, we reverse every second sequence (depending on whether the first or the second sequence is in ascending order): \perm{542} $\mapsto$ \perm{245} and \perm{71} $\mapsto$ \perm{17}.
Depending on the implementation of the merging in the following step, this `reversal' step can be avoided.
Last, we merge similarly to the standard merge sort: \perm{245} $\vee$ \perm{368} $\mapsto$ \perm{234568}, \perm{17} $\vee$ \perm{9} $\mapsto$ \perm{179} and finally \perm{234568} $\vee$ \perm{179} $\mapsto$ \perm{123456789}.
Natural merge sort is a fast sorting algorithm for data with preexisting order.
Using the methods developed above to enumerate permutations by their run structure, it is in principle possible to give average (instead of best- and worst-case) complexity estimates for such an algorithm.


\begin{acknowledgments}
  D.~S. is very grateful for the kind hospitality extended by the Department of Mathematics at the University of York where this work was carried out.
  C.~J.~F. thanks Nik Ruškuc and Vincent Vatter for useful comments on an early version of the text.
\end{acknowledgments}


\small

\end{document}